\numberwithin{equation}{section}
\newtheorem{theorem}{Theorem}[section]
\newtheorem{proposition}[theorem]{Proposition}
\newtheorem{lemma}[theorem]{Lemma}
\newtheorem{corollary}[theorem]{Corollary}
\newtheorem*{thm1}{Theorem 1}
\newtheorem*{thm2}{Theorem 2}
\newtheorem*{thm3}{Theorem 3}
\newtheorem*{cor1}{Corollary 1}
\newtheorem{remark}[theorem]{Remark}
\begin{document}                                                 
	\title[Analysis of Minimizers of the Lawrence-Doniach Energy]{Analysis of Minimizers of the Lawrence-Doniach Energy for Superconductors in Applied Fields}                                 
	\author[P. Bauman]{Patricia Bauman} 
                           
	\address{P. B.\\Department of Mathematics\\Purdue University\\West Lafayette\\IN 47907.}                                    
	\email{baumanp@purdue.edu.}                                      
	\author[G. Peng]{Guanying Peng}  
	\address{G. P.\\Department of Mathematics\\University of Arizona\\Tucson\\AZ 85721.} 
	\email{gypeng@math.arizona.edu.}
	\date{}                                       
	\thanks{The authors were supported in part by NSF Grant DMS-1109459.}                                     
	\keywords{Lawrence-Doniach energy, layered superconductor, Ginzburg-Landau energy, minimizer}                                   
	\subjclass[2010]{35J60, 35J65, 35Q40}                                  
	\begin{abstract}
	We analyze minimizers of the Lawrence-Doniach energy
	for layered superconductors \textcolor{black}{with Josephson constant $\lambda$ and Ginzburg-Landau \textcolor{black}{parameter} $1/\epsilon$} in a bounded generalized cylinder $D=\Omega\times[0,L]$ in $\mathbb{R}^3$,where $\Omega$ is a bounded simply connected Lipschitz domain in $\mathbb{R}^2$. \textcolor{black} {Our main result is that in an applied magnetic field $\vec{H}_{ex}=h_{ex}\vec{e}_{3}$ which is perpendicular to the layers with $\left|\ln\epsilon\right|\ll h_{ex}\ll\epsilon^{-2}$,
		the minimum Lawrence-Doniach energy is given by
		$\frac{|D|}{2}h_{ex}\ln\frac{1}{\epsilon\sqrt{h_{ex}}}(1+o_{\epsilon,s}(1))$ as $\epsilon$ and the interlayer distance $s$ tend to zero.
		We also prove estimates on the behavior of the order parameters, induced magnetic field, and vorticity in this regime.
		Finally, we observe that as a consequence of our results, the same asymptotic formula holds for the minimum anisotropic three-dimensional Ginzburg-Landau energy in $D$ with anisotropic parameter $\lambda$ and $o_{\epsilon,s}(1)$ replaced by $o_{\epsilon}(1)$.}
	\end{abstract}                
	\maketitle

\section{Introduction}

The Lawrence-Doniach model was formulated by Lawrence and Doniach in 1971 as a macroscopic model for layered superconductors. While the standard Ginzburg-Landau model has been well accepted as a macroscopic model for isotropic superconductors, it does not account for the anisotropy in three-dimensional high temperature superconducting materials. For these materials, depending on the nature of the anisotropy in the material, physicists have used the Lawrence-Doniach model (which treats the superconducting material as a stack of parallel superconducting layers with nonlinear Josephson coupling between them) or the three-dimensional anisotropic Ginzburg-Landau model (which is a slight modification of the standard three-dimensional Ginzburg-Landau model).  \textcolor{black}{(See \cite{Iye}.)}

The standard two-dimensional Ginzburg-Landau model (with energy given by \eqref{GLenergy}) has been intensively investigated.  In this case, an analysis of the behavior of energy minimizers and their vortex structure in an applied magnetic field $h_{ex}\vec{e}_3$ with modulus $h_{ex}$ in different regimes (e.g., $h_{ex}\approx c|\ln\epsilon|,$  $|\ln\epsilon|\ll h_{ex}\ll\epsilon^{-2}$,  or $h_{ex}\geq\frac{C}{\epsilon^2}$) is now well understood. \textcolor{black}{In particular, under appropriate assumptions on $c$ and $C$ above, it was shown that these regimes correspond to the appearance of vortices in superconductivity, the intermediate mixed phase, and the normal (nonsuperconducting) phase, respectively.  (See \cite{Giorgi-Phillips}, \textcolor{black}{\cite{JS}} and \textcolor{black}{\cite{SS1}-\cite{SS}}.)  These results have now been generalized to the standard three-dimensional Ginzburg-Landau model (with energy given by \eqref{AGL} and $\lambda=1$). (See \textcolor{black}{\cite{ABO}}, \cite{BJOS1}, \cite{BJOS2}, \cite{Giorgi-Phillips} and \cite{Ka}.) In the three-dimensional case, defects in energy minimizers are vortex filaments, and behavior of the order parameter near the defects is not as well understood as in two-dimensional superconductors.  (See \textcolor{black}{\cite{ABO}}, \cite{BJOS1}, \cite{BJOS2}, and \cite{Giorgi-Phillips}.)}

For the Lawrence-Doniach model (with energy given by (\ref{LD})), \textcolor{black}{a considerable amount of work has been done. (See \cite{ABB1}-\cite{ABS2}, \cite{BK}, \cite{Chapman} and \cite{Pe}.) In particular,} an analysis with $h_{ex}$ in the first two regimes \textcolor{black}{was done for the gauge-periodic problem in \cite{ABS}; in that case the superconductor \textcolor{black}{was} assumed to occupy all of $\mathbb{R}^3$ and the gauge invariant quantities were assumed to be periodic with respect to a given parallelepiped.  For the non-gauge periodic case,} in the last regime, $h_{ex}\geq\frac{C}{\epsilon^2}$, it was shown in \cite{BK} that if $C$ is \textcolor{black}{a sufficiently large constant (independent of $\epsilon$ and the interlayer distance),} all minimizers of the Lawrence-Doniach energy are in the normal (nonsuperconducting) phase, that is, the order parameters on the layers, $\{u_n\}_{n=0}^{N}$, are all identically equal to zero, and the induced magnetic field, $\nabla \times  \vec{A}$, is identically equal to the applied magnetic field. \textcolor{black}{Recently, the second author has proved $\Gamma$-convergence of the normalized Lawrence-Doniach energies, $\{(\ln \epsilon)^{-2}\mathcal{G}_{LD}^{\epsilon, s}(\{u_n\}_{n=0}^N, \vec{A})\}$ with $h_{ex}$ in the first regime, assuming $h_{ex}\approx c|\ln\epsilon|$, along with an additional scaling assumption on $s$ versus $\epsilon$, as $\epsilon$ tends to zero.  (See \cite{Pe}.)}
 
In this paper, we investigate \textcolor{black}{minimizers of the Lawrence-Doniach energy with
$h_{ex}$ in the second regime, \textcolor{black}{i.e., $|\ln\epsilon|\ll h_{ex}\ll\epsilon^{-2}$,} without gauge-periodicity assumptions or a relative size condition on $s$ versus $\epsilon$.}

The Lawrence-Doniach model describes a layered superconductor occupying a cylinder \textcolor{black}{$D=\Omega\times\textcolor{black}{[0,L]}$ with cross-section $\Omega$, height $L$, and $N+1$ layers of superconducting material at $\Omega\times\{ns\}$ for $n=0,1,\cdots,N$, where $N \geq 1$ and $s=\frac{L}{N}$.  We assume throughout this paper that
$\Omega$ is a bounded simply connected Lipschitz domain in $\mathbb{R}^2$ \textcolor{black}{and that $L$ and $\lambda$ are fixed positive constants.}
In an applied magnetic field} \textcolor{black}{$\vec{H}=h_{ex} \vec{e}_3$,} the Lawrence-Doniach energy is given by
\begin{equation}\label{LD}\begin{split}
\mathcal{G}_{LD}^{\epsilon, s}(\{u_n\}_{n=0}^N, \vec{A})&= s\sum^N_{n=0} \int_\Omega\left[\frac{1}{2}|\hat{\nabla}_{\hat{A}_{n}}u_n|^2+\frac{1}{4\epsilon^2}(1-|u_n|^2)^2\right]d\hat{x}\\
&+s\sum^{N-1}_{n=0}\int_\Omega\frac{1}{2\lambda^{2}s^2}|u_{n+1}-u_{n}e^{\imath \int_{ns}^{(n+1)s}A^{3}dx_{3}}|^{2}d\hat{x}\\
&+\frac{1}{2}\int_{\mathbb{R}^3}|\nabla\times{\vec{A}}-h_{ex}\vec{e}_3|^{2}dx
\end{split}\end{equation}
for $(\{u_n\}_{n=0}^N, \vec{A})$ such that
\begin{equation}\label{E}
\begin{cases}
&\{u_n\}_{n=0}^N\in [H^{1}(\Omega;\mathbb{C})]^{N+1} \text{ and} \\
& \vec{A}\in E:=\{\vec{C}\in H^{1}_{loc}(\mathbb{R}^3;\mathbb{R}^3):(\nabla\times\vec{C})-h_{ex}\vec{e}_{3}\in L^{2}(\mathbb{R}^3;\mathbb{R}^3)\}.
\end{cases}
\end{equation}
Here $\epsilon>0$ is the reciprocal of the Ginzburg-Landau parameter, and $\lambda$ is the Josephson penetration depth. The applied magnetic field $h_{ex}\vec{e}_3$ is assumed to satisfy $|\ln{\epsilon}|\ll h_{ex}\ll\epsilon^{-2}$ as $\epsilon\rightarrow 0$. The complex valued function $u_n$ defined in $\Omega$ is the order parameter for the $n$th layer and $|u_n(x_1,x_2)|^2$ is the density of superconducting electron pairs at each point $(x_1,x_2,ns)$ on the $n$th layer. For a minimizer of the Lawrence-Doniach energy \eqref{LD}, $|u_n(x_1,x_2)| > 0$ corresponds to a superconducting state at $(x_1,x_2,ns)$, whereas $|u_n(x_1,x_2)|=0$ corresponds to a normal (nonsuperconducting) state at $(x_1,x_2,ns)$, in which the density of superconducting electrons is zero. The vector field $\vec{A}=(A^1,A^2,A^3)$ defined on $\mathbb{R}^3$ is called the magnetic potential; its curl, $\nabla\times\vec{A}$, is the induced magnetic field. We let $x=(x_1,x_2,x_3)$, $\hat{\nabla}=(\partial_{1},\partial_{2})$, $\hat{x}=(x_1,x_2)$, $\hat{A}=(A^1,A^2)$ and $\hat{A}_n(\hat{x})=(A^{1}(\hat{x},ns),A^{2}(\hat{x},ns))$, the trace of $\hat{A}$ on the $n$th layer.  We set $\hat{\nabla}_{\hat{A}_{n}}u_n=\hat{\nabla}u_n-\imath\hat{A}_nu_n$ on $\Omega$. In the following, given two complex numbers $u$ and $v$, we let $(u,v)=\frac{1}{2}(\bar u v+u\bar v)=\Re(u\bar v)$, which is an inner product of $u=u_1+\imath u_2$ and $v=v_1+\imath v_2$ in $\mathbb{C}$ that agrees with the inner product of $(u_1,u_2)$ and $(v_1,v_2)$ in $\mathbb{R}^2$.

\textcolor{black}{We remark that since} $\vec{A}\in H^{1}_{loc}(\mathbb{R}^3;\mathbb{R}^3)$, it follows from the trace theorem and the Sobolev imbedding theorem that its trace $\hat{A}_n\in H^{\frac{1}{2}}_{loc}(\mathbb{R}^2;\mathbb{R}^2) \subset L^4_{loc}(\mathbb{R}^2;\mathbb{R}^2)$ and therefore the Lawrence-Doniach energy $\mathcal{G}_{LD}^{\epsilon, s}(\{u_n\}_{n=0}^N, \vec{A})$ is well-defined and finite. The existence of minimizers in $ [H^{1}(\Omega;\mathbb{C})]^{N+1} \times E$ was shown by Chapman, Du and Gunzburger in \cite{Chapman}.
Each minimizer of $\mathcal{G}_{LD}^{\epsilon, s}$ corresponds to a physically realistic state for the layered superconductor. The minimizer satisfies the Euler-Lagrange equations associated to the Lawrence-Doniach energy. This system of equations is called the Lawrence-Doniach system and it is given by
\begin{equation*}
\begin{cases}
(\hat{\nabla}-\imath\hat{A}_n)^{2}u_n+\frac{1}{\epsilon^2}(1-|u_n|^2)u_n+P_n=0 & \text{ on } \Omega,\\
\nabla\times(\nabla\times\vec{A})=(j_1,j_2,j_3) & \text{ in } \mathbb{R}^3,\\
(\hat{\nabla}-\imath\hat{A}_n)u_n\cdot\vec{n}=0 & \text{ on } \partial\Omega,\\
\nabla\times\vec{A}-h_{ex}\vec{e}_3\in L^2(\mathbb{R}^3;\mathbb{R}^3)
\end{cases}
\end{equation*}
for all $n=0,1,\dddot\ ,N$, where
\begin{equation*}
P_n=\begin{cases}
\frac{1}{\lambda^{2}s^2}(u_{1}{\Upsilon}_{0}^{1}-u_0) & \text{if $n=0$,}\\
\frac{1}{\lambda^{2}s^2}(u_{n+1}{\Upsilon}_{n}^{n+1}+u_{n-1}\Upsilon_{n-1}^n-2u_n) & \text{if $0<n<N$,}\\
\frac{1}{\lambda^{2}s^2}(u_{N-1}\Upsilon_{N-1}^{N}-u_N) & \text{if $n=N$,}
\end{cases}
\end{equation*}
\begin{equation*}
\Upsilon_{n}^{n+1}=e^{\imath\int_{ns}^{(n+1)s}A^{3}dx_3} \text{ for $n=0,1,\dddot\ ,N-1$,}
\end{equation*}
\begin{equation*}
j_i=-s\sum\limits_{n=0}^{N}(\partial_{i}u_{n}-\imath A_n^iu_n,-\imath u_n)\chi_{\Omega}(x_1,x_2)dx_{1}dx_{2}\delta_{ns}(x_3) \text{ for $i=1,2,$}
\end{equation*}
\begin{equation*}
j_3=s\sum\limits_{n=0}^{N-1}\dfrac{1}{\lambda^{2}s^2}(u_{n+1}-u_{n}\Upsilon_{n}^{n+1},\imath u_{n}\Upsilon_{n}^{n+1})\chi_{\Omega}(x_1,x_2)\chi_{[ns,(n+1)s]}(x_3).
\end{equation*}
It was proved in \cite{BK} that a minimizer $(\{u_n\}_{n=0}^N, \vec{A})$ of (\ref{LD})
satisfies $|u_n|\leq 1$ a.e. in $\Omega$ for all $n=0,1,\dddot\ ,N$.

Two configurations $(\{u_n\}_{n=0}^N, \vec{A})$ and $(\{v_n\}_{n=0}^N, \vec{B})$ in $[H^{1}(\Omega;\mathbb{C})]^{N+1}\times E$ are called gauge equivalent if there exists a function $g\in H^{2}_{loc}(\mathbb{R}^3)$ such that
\begin{equation}\label{gauge}
\begin{cases}
u_n(\hat{x})=v_n(\hat{x})e^{\imath g(\hat{x},ns)} &\text{ in } \Omega,\\
\vec{A}=\vec{B}+\nabla g &\text{ in } \mathbb{R}^3.\\
\end{cases}
\end{equation}
Simple calculations show that $\mathcal{G}_{LD}^{\epsilon, s}$ (and each term in $\mathcal{G}_{LD}^{\epsilon, s}$) is invariant under the above gauge transformation, i.e., for two configurations $(\{u_n\}_{n=0}^N, \vec{A})$ and $(\{v_n\}_{n=0}^N, \vec{B})$ that are related by \eqref{gauge}, we have $\mathcal{G}_{LD}^{\epsilon, s}(\{u_n\}_{n=0}^N, \vec{A})=\mathcal{G}_{LD}^{\epsilon, s}(\{v_n\}_{n=0}^N, \vec{B})$. Let $\vec{a}=\vec{a}(x)$ be any fixed smooth vector field on $\mathbb{R}^3$ such that $\nabla\times\vec{a}=\vec{e}_3$ in
$\mathbb{R}^3$. For example, we may choose $\vec{a}(x)=\frac{1}{2}(-x_2,x_1,0)$. It was also proved in \cite{BK} that every pair $(\{u_n\}_{n=0}^N, \vec{A})\in[H^1(\Omega;\mathbb{C})]^{N+1}\times E$ is gauge equivalent to another pair $(\{v_n\}_{n=0}^N, \vec{B}) \in [H^1(\Omega;\mathbb{C})]^{N+1}\times K$ where
\begin{equation}\label{K}
K:=\{\vec{C}\in E:\nabla\cdot\vec{C}=0 \text{ and } \vec{C}-h_{ex}\vec{a}\in \check{H}^1(\mathbb{R}^3)\cap L^6(\mathbb{R}^3;\mathbb{R}^3)\}.
\end{equation}
Here the space $\check{H}^1(\mathbb{R}^3)$ represents the completion of $C_0^{\infty}(\mathbb{R}^3;\mathbb{R}^3)$ with respect to the seminorm
\begin{equation*}
\lVert\vec{C}\rVert_{\check{H}^1(\mathbb{R}^3)}=(\int_{\mathbb{R}^3}|\nabla\vec{C}|^2dx)^{\frac{1}{2}}.
\end{equation*}
In particular, any minimizer of $\mathcal{G}_{LD}^{\epsilon,s}$ in the admissible space $[H^1(\Omega;\mathbb{C})]^{N+1}\times E$ is gauge-equivalent to a minimizer in the space $[H^1(\Omega;\mathbb{C})]^{N+1}\times K$, called the ``Coulomb gauge" for $\mathcal{G}_{LD}^{\epsilon,s}$. It was shown in \cite{BK} that minimizers in the Coulomb gauge satisfy $u_n \in C^{\infty}(\Omega)$ and $\hat{A}_n \in H^1_{\text loc}(\mathbb{R}^2;\mathbb{R}^2)$ for all $n=0,1,\dddot\ ,N$. Throughout this paper, we take $\vec{a}(x)=\frac{1}{2}(-x_2,x_1,0)$.

Given the above definitions, our main results are the following:
\begin{thm1}\label{T1}
Assume $|\ln\epsilon|\ll h_{ex}\ll\epsilon^{-2}$ as $\epsilon\rightarrow 0$. Let $(\{u_n\}_{n=0}^N, \vec{A})\in[H^1(\Omega;\mathbb{C})]^{N+1}\times K$ be a minimizer of $\mathcal{G}_{LD}^{\epsilon, s}$. Then denoting the volume of $D$ by $|D|$, we have
\begin{equation*}
\begin{split}
\left| \mathcal{G}_{LD}^{\epsilon, s}(\{u_n\}_{n=0}^N, \vec{A})-\frac{|D|}{2}h_{ex}\ln\frac{1}{\epsilon\sqrt{h_{ex}}}\right| &\leq(Cs^\frac{1}{7}+o_{\epsilon}(1))\frac{|D|}{2}h_{ex}\ln\frac{1}{\epsilon\sqrt{h_{ex}}}\\
&=o_{\epsilon,s}(1)\frac{|D|}{2}h_{ex}\ln\frac{1}{\epsilon\sqrt{h_{ex}}}
\end{split}
\end{equation*}
\textcolor{black}{for all $\epsilon$ and $s$ sufficiently small, where $C$ is a positive constant depending only on the diameter of $\Omega$ and L.} In particular,
\begin{equation*}
\lim_{(\epsilon,s)\rightarrow(0,0)}\frac{\mathcal{G}_{LD}^{\epsilon,s}(\{u_n\}_{n=0}^N, \vec{A})}{h_{ex}\ln\frac{1}{\epsilon\sqrt{h_{ex}}}}=\frac{|D|}{2}.
\end{equation*}
\end{thm1}
\noindent (See Theorem 3.1 and Theorem 5.1.)

Here $o_{\epsilon}(1)$ denotes a quantity that converges to $0$ as $\epsilon\rightarrow 0$ and $o_{\epsilon,s}(1)$ denotes a quantity that converges to $0$ as $(\epsilon,s)\rightarrow(0,0)$. Theorem 1 generalizes a result in the gauge periodic case studied by Alama, Bronsard and Sandier for the energy (\ref{LD}) in which the domain $\Omega$ is replaced by a parallelogram $P$ in $\mathbb{R}^2$, the integral of $|\nabla\times{\vec{A}}-h_{ex}\vec{e}_3|^{2}$ is taken over $P\times[0,L]$ instead of over $\mathbb{R}^3$, and the minimization takes place among gauge periodic configurations $(\{u_n\}_{n=0}^N, \vec{A})$ in $\mathbb{R}^3$ with period $P\times[0,L]$. (See \cite{ABS}.)  In that case, they further showed that for a minimizer of the gauge periodic problem, the order parameters $u_n$ are all equal and $A^3$ is identically zero. In particular, the Josephson coupling term
\begin{equation}\label{JC}
s\sum^{N-1}_{n=0}\int_\Omega\frac{1}{2\lambda^{2}s^2}|u_{n+1}-u_{n}e^{\imath \int_{ns}^{(n+1)s}A^{3}dx_{3}}|^{2}d\hat{x}
\end{equation}
vanishes in that case. They also proved that $\hat{A}(\hat{x},\cdot)$ is periodic in $x_3$ with period $s$ and established certain symmetries between the layers in $\hat{A}$.

The results of Alama, Bronsard and Sandier indicated a close connection in the gauge periodic case between the Lawrence-Doniach energy and the two-dimensional Ginzburg-Landau energy $GL_\epsilon$ given by
\begin{equation}\label{GLenergy}
GL_\epsilon(u,\hat{A})=\frac{1}{2}\int_{\Omega}\left[|\hat{\nabla}_{\hat{A}}u|^2+\frac{1}{2\epsilon^2}(1-|u|^2)^2\right]d\hat{x}+\frac{1}{2}\int_{\mathbb{R}^2}(\hat{\text{curl}}\hat{A}-h_{ex})^2d\hat{x}
\end{equation}
for $h_{ex}$ as assumed above, where $\hat{\text{curl}}$ denotes the two-dimensional curl defined by $\hat{\text{curl}}(B^1,B^2)=\partial_{1}B^2-\partial_{2}B^1$. We remark that for a minimizer of the two-dimensional energy $GL_\epsilon$, the magnetic potential $\hat{A}$ satisfies $\hat{\text{curl}}\hat{A}=h_{ex}$ in $\mathbb{R}^2\setminus\Omega$.  (See Lemma 2.1 in \cite{Giorgi-Phillips}.) Therefore the minimum of $GL_\epsilon$ is equal to the minimum of $F_{\epsilon}$ given by
\begin{equation*}
F_{\epsilon}(u,\hat{A})=\frac{1}{2}\int_{\Omega}\left[|\hat{\nabla}_{\hat{A}}u|^2+\frac{1}{2\epsilon^2}(1-|u|^2)^2\right]d\hat{x}+\frac{1}{2}\int_{\Omega}(\hat{\text{curl}}\hat{A}-h_{ex})^2d\hat{x}.
\end{equation*}
(See Prop. 3.4 in \cite{SS} for bounded simply connected smooth domains and Prop. 2.1 in this paper for bounded simply connected Lipschitz domains.)

\textcolor{black}{The upper bound on the minimum Lawrence-Doniach energy in Theorem 1 is obtained by constructing a test function that is an extension of $N+1$ copies in each layer, $\Omega \times \{ns\}$, of a two-dimensional configuration $(u,(\textcolor{black}{A^1,A^2})(\hat x,ns))$. This construction is modeled after one that was used by Kachmar \cite{Ka} to prove an upper bound for the minimum value of the standard three-dimensional Ginzburg-Landau energy \textcolor{black}{with $h_{ex}$ in the same regime as in Theorem \ref{T1}}.}


A matching lower bound is much more difficult to establish. To obtain it, we prove that for a minimizer $(\{u_n\}_{n=0}^N,\vec{A})\in[H^1(\Omega;\mathbb{C})]^{N+1}\times K$  of $\mathcal{G}_{LD}^{\epsilon,s}$ \textcolor{black}{and any $h_{ex} > 0$ (with no assumption on $h_{ex}$ versus $\epsilon)$}, we have
\begin{equation}\label{Trace}
\frac{1}{2}\sum^{N-1}_{n=0}\int_{ns}^{(n+1)s}\int_{\Omega}|\hat{\text{curl}}\hat{A}(\hat{x},x_3)-
\hat{\text{curl}}\hat{A}_n(\hat{x})|^2d\hat{x}dx_3\leq Cs^{\frac {2}{7}}\mathcal{G}_{LD}^{\epsilon,s}(\{u_n\}_{n=0}^N,\vec{A})
\end{equation}
\textcolor{black}{for some constant C depending only on the diameter of $\Omega$ and $L$.  (See Theorem
\ref{TraceEstimate}.)  For ease of notation, let $M_{\epsilon}:=\frac{|D|}{2}h_{ex}\ln\frac{1}{\epsilon\sqrt{h_{ex}}}.$}
\textcolor{black}{Note that for $h_{ex}$ as in Theorem 1, $\epsilon^2 M_{\epsilon} \to 0$ and $\epsilon M_{\epsilon} \to \infty$ as $\epsilon \to 0$.}
 Our proof of \textcolor{black}{Theorem \ref{TraceEstimate}} uses a single layer potential representation formula for $\vec{A}$ proved by Bauman and Ko in \cite{BK}
as well as a priori estimates for single layer potentials (see \cite{Fabes} and \cite{Verchota}) and harmonic functions. \textcolor{black}{Inequality \eqref{Trace}} plays a crucial role in \textcolor{black}{our} proof of the lower bound, as it implies that \textcolor{black}{for minimizers of the Lawrence-Doniach energy}, the three-dimensional integral $\frac{1}{2}\int_D|\hat{\text{curl}}\hat{A}(\hat x,x_3)-h_{ex}|^{2}dx$ can be approximated within $o_{\epsilon,s}(1) M_{\epsilon}$ by the sum of two-dimensional integrals, $\Sigma_{n=0}^{N-1} \frac{s} {2} \int_{\Omega} |\hat{\text{curl}}\hat{A}_n(\hat x)-h_{ex}|^2 d \hat x$. \textcolor{black}{As a result
we have that for a minimizer, the first term in the energy (\ref{LD}) plus the magnetic term $\frac{1}{2}\int_D|\hat{\text{curl}}\hat{A}-h_{ex}|^{2}dx$ can be approximated within $o_{\epsilon,s}(1) M_{\epsilon}$} by the sum of $sF_\epsilon(u_n,\hat{A}_n)$. Combining this with the lower bound for the minimum of $F_\epsilon$ proved by Sandier and Serfaty in \cite{SS}, we obtain the lower bound in Theorem 1.

As a consequence of the proof of Theorem 1, we conclude that the Josephson coupling term \eqref{JC} contributes a lower order energy to the total Lawrence-Doniach energy. In fact, we obtain


\begin{thm2}  Assume $|\ln\epsilon|\ll h_{ex}\ll\frac{1}{\epsilon^2}$ as $\epsilon\rightarrow 0$. Let $(\{u_n\}_{n=0}^N,\vec{A})\in[H^1(\Omega;\mathbb{C})]^{N+1}\times K$ be a minimizer of $\mathcal{G}_{LD}^{\epsilon,s}$. Then
\begin{equation*}
\begin{split}
&s\sum^{N-1}_{n=0}\int_\Omega\frac{1}{2\lambda^{2}s^2}|u_{n+1}-u_{n}e^{\imath \int_{ns}^{(n+1)s}A^{3}dx_{3}}|^{2}d\hat{x}+\frac{1}{2}\int_{\mathbb{R}^3\setminus D}|\nabla\times\vec{A}-h_{ex}\vec{e}_3|^{2}dx\\
&\qquad\qquad+\frac{1}{2}\int_{D}\left[(\frac{\partial A^3}{\partial x_2}-\frac{\partial A^2}{\partial x_3})^{2}+(\frac{\partial A^1}{\partial x_3}-\frac{\partial A^3}{\partial x_1})^{2}\right]dx\leq o_{\epsilon,s}(1)M_{\epsilon}
\end{split}
\end{equation*}
as $(\epsilon,s)\rightarrow(0,0).$
\end{thm2}


\noindent (See Section 5 for the proof.)

Theorems 1 and 2 and the estimate \eqref{Trace} imply a strong influence up to leading order of the two-dimensional energy $F_\epsilon$ on the minimal Lawrence-Doniach energy.  \textcolor{black}{More precisely, we prove in Corollary 5.2 that for a minimizer $(\{u_n\}_{n=0}^N, \vec{A})\in[H^1(\Omega;\mathbb{C})]^{N+1}\times K$ of the Lawrence-Doniach energy, we have}
$$\textcolor{black}{\mathcal{G}_{LD}^{\epsilon, s}(\{u_n\}_{n=0}^N, \vec{A}) =  \left[ \sum^{N-1}_{n=0}sF_\epsilon(u_n,\hat {A}_n)\right] + o_{\epsilon,s}(1)M_{\epsilon}.}$$
\noindent Another consequence of Theorems 1 and 2 is the following:

\begin{thm3}
Under the assumptions of Theorem 1, \textcolor{black}{we have
\begin{equation*}
\frac{1}{N+1}\sum\limits_{n=0}^{N}\int_{\Omega} (1-|u_n|^2)^2 d\hat{x} +
\textcolor{black}{\frac{1}{2}}\int_{\mathbb{R}^3} |\frac{\nabla\times\vec{A}}{h_{ex}}-\vec{e}_3|^2 dx
\rightarrow 0
\end{equation*}}
and
\begin{equation*}
\frac{1}{N+1}\sum_{n=0}^N\frac{\mu_n}{h_{ex}}\rightarrow d\hat{x}\ \text{in}\ H^{-1}(\Omega)
\end{equation*}
as $(\epsilon,s)\rightarrow(0,0)$, where \textcolor{black}{$d\hat{x}$ is the two-dimensional Lebesgue measure and} $\mu_n$ is the vorticity in the $n$th layer, defined by $$\mu_n:=\hat{\textnormal{curl}}(\imath u_n,\hat{\nabla}_{\hat{A}_n}u_n)+\hat{\textnormal{curl}}\hat{A}_n.$$
\end{thm3}
\noindent (See Theorem 5.3.)


The convergence of the average scaled vorticity in the layers to the Lebesgue measure generalizes a result for minimizers of the two-dimensional Ginzburg-Landau energy $F_{\epsilon}$ studied by Sandier and Serfaty.  (See Cor. 8.1 in \cite{SS}.)  They showed that for minimizers of $F_\epsilon$,
the scaled vorticity measure $\frac{\mu}{h_{ex}}$ converges to $d\hat{x}$ in $H^{-1}(\Omega)$ as $\epsilon\rightarrow 0$. The vorticity measure $\mu_n$ in each layer is a gauge-invariant version of the Jacobian determinant of $u_n$, and is \textcolor{black}{analogous} to the vorticity in fluids.  If $u_n$ is given in polar coordinates by $\rho_n e^{i\theta_n}$, then $\mu_n=\hat{\textnormal{curl}}\{\rho_n^2 ({\hat {\nabla}} \theta_n-\hat {A}_n)\}+\hat{\textnormal{curl}}{\hat{A}}_n$. The above theorem indicates that on average there are numerous vortices and they have an approximately uniform distribution. More detailed results on the nature, \textcolor{black}{location}, and number of vortices for minimizers of the Lawrence-Doniach energy is an interesting open problem to which the results of this paper should be relevant.  \textcolor{black}{It would also be interesting to analyze the asymptotic behavior of minimizers when the applied magnetic field is $h_{ex} \vec{e}$ for $h_{ex}$ in the regime considered here but with $\vec {e}$ a unit vector that is not perpendicular to the horizontal layers.}

Recall that another model for a large class of high-temperature anisotropic superconductors is the three-dimensional anisotropic Ginzburg-Landau model. In \textcolor{black}{that} model, a mass tensor with unequal principal values is introduced to account for the anisotropic structure in the superconductor. (See \cite{Chapman} and \cite{Iye} for more background information.)  For a given admissible function $(\psi,\vec{A})$ in $H^1(D;\mathbb{C}) \times E$, the anisotropic Ginzburg-Landau energy $\mathcal{G}_{AGL}^{\epsilon,\lambda}$ \textcolor{black}{in an applied magnetic field $\vec{H}=h_{ex}\vec{e}_3$} is given by
\begin{equation}\label{AGL}
\begin{split}
\mathcal{G}_{AGL}^{\epsilon,\lambda}(\psi,\vec{A})&=\frac{1}{2}\int_D\left[|\hat{\nabla}_{\hat{A}}\psi|^2+\frac{1}{\lambda^2}|(\frac{\partial}{\partial x_3}-\imath A^3)\psi|^2\right]dx\\
&+\int_D\frac{(1-|\psi|^2)^2}{4\epsilon^2}dx+\frac{1}{2}\int_{\mathbb{R}^3}|\nabla\times\vec{A}-h_{ex}\vec{e}_3|^2 dx.
\end{split}
\end{equation}
Here $\lambda$ is the same constant as in the Lawrence-Doniach energy. If $\lambda=1$, the anisotropic Ginzburg-Landau energy becomes the standard three-dimensional Ginzburg-Landau energy. \textcolor{black}{For convenience in comparing $\mathcal{G}_{AGL}^{\epsilon,\lambda}$ and
the Lawrence-Doniach energy for any given $\lambda > 0$, set $\mathcal{G}_{LD}^{\epsilon,s,\lambda}
=\mathcal{G}_{LD}^{\epsilon,s}$.} \textcolor{black}{A} connection between the Lawrence-Doniach energy $\mathcal{G}_{LD}^{\epsilon, s,\lambda}$ and the anisotropic Ginzburg-Landau energy $\mathcal{G}_{AGL}^{\epsilon,\lambda}$ when \textcolor{black}{$\epsilon$, $\lambda$, and $h_{ex}$} are fixed and $s$ tends to zero was studied in \textcolor{black}{\cite{BK} and \cite{Chapman}}. In particular, it was shown in \cite{Chapman} that under this assumption, a subsequence of minimizers of the Lawrence-Doniach energy \textcolor{black}{(after extending the order parameter by linear interpolation between the layers in $D$) forms} a minimizing sequence of the anisotropic Ginzburg-Landau energy. (\textcolor{black}{See Theorem 5.1 in \cite{Chapman}}.)

Our last result concerns \textcolor{black}{a comparison result and} the asymptotic behavior of the minimum values of the two energies \textcolor{black}{for $\lambda > 0$ fixed but arbitrary as both $\epsilon$ and $s$ tend to zero. \textcolor{black}{The \textcolor{black}{following estimate (\ref{thm3.2})} extends a previous result proved in
		\cite{Ka} for the standard three-dimensional Ginzburg-Landau energy (satisfying $\lambda=1$).
		Thus when $\lambda=1$, our proof gives an alternate method for proving their result.}
	(See also \textcolor{black}{\cite{ABO}}, \cite{BJOS1} and \cite{BJOS2} for other interesting results on the asymptotic behavior of minimizers for the standard three-dimensional Ginzburg energy.) Using Theorem 1 of this paper and Theorem 5.1 of \cite{Chapman}, we prove:}


\begin{cor1}\label{c1}
Assume $|\ln\epsilon|\ll h_{ex}\ll\epsilon^{-2}$ as $\epsilon\rightarrow 0$ and $\lambda>0$. Let $(\{u_n\}_{n=0}^N,\vec{A})\in[H^1(\Omega;\mathbb C)]^{N+1}\times E$ be a minimizer of $\mathcal{G}_{LD}^{\epsilon,s,\lambda}$ and let $(\zeta,\vec{B})\in H^1(D;\mathbb C)\times E$ be a minimizer of $\mathcal{G}_{AGL}^{\epsilon,\lambda}$. We have
\begin{equation}\label{thm3.1}
|\mathcal{G}_{AGL}^{\epsilon,\lambda}(\zeta,\vec{B})-\mathcal{G}_{LD}^{\epsilon, s,\lambda}(\{u_n\}_{n=0}^N, \vec{A})|\leq o_{\epsilon,s}(1)
\frac{|D|}{2}h_{ex}\ln\frac{1}{\epsilon\sqrt{h_{ex}}}
\end{equation}
as $(\epsilon,s)\rightarrow(0,0)$, \textcolor{black}{and hence}
\begin{equation}\label{thm3.2}
|\mathcal{G}_{AGL}^{\epsilon,\lambda}(\zeta,\vec{B})-\frac{|D|}{2}h_{ex}\ln\frac{1}{\epsilon\sqrt{h_{ex}}}| \leq o_{\epsilon}(1)\frac{|D|}{2}h_{ex}\ln\frac{1}{\epsilon\sqrt{h_{ex}}}
\end{equation}
as $\epsilon\rightarrow 0$.
\end{cor1}


\noindent (See \textcolor{black}{Lemma} 6.1 and Corollary 6.2.)


Our paper is organized as follows: In Section 2 we state some preliminary results concerning the single layer potential representation formulas for $\hat{A}$ and $\hat{A}_n$. In Section 3 we prove the upper bound on the minimal Lawrence-Doniach energy. In Section 4 we prove the a priori estimate for the magnetic potential $\vec A$ stated in \eqref{Trace}. \textcolor{black}{In Section 5 we use \eqref{Trace} to prove the lower bound on the minimal Lawrence-Doniach energy which completes the proof of Theorem 1; we also prove Theorems 2 and 3.  Finally in Section 6 we prove Corollary 1.}

\section{Preliminaries}

As noted in the introduction, the Lawrence-Doniach energy is invariant under the gauge transformation \eqref{gauge} and minimizers of $\mathcal{G}_{LD}^{\epsilon,s}$ are gauge-equivalent to a minimizer in the ``Coulomb gauge". It was proved in \cite{BK} that, for a minimizer $(\{u_n\}_{n=0}^N, \vec{A})$ of $\mathcal{G}_{LD}^{\epsilon,s}$ in the ``Coulomb gauge", the magnetic potential $\vec{A}$ has an explicit representation formula using single layer potentials. Recall the definition of the space $\check{H}^1(\mathbb{R}^3)$ in the introduction. From \cite{BK}, each $\vec{C}\in\check{H}^1(\mathbb{R}^3)$ has a representative in $L^6(\mathbb{R}^3;\mathbb{R}^3)$ such that
\begin{equation*}
\lVert\vec{C}\rVert_{L^6(\mathbb{R}^3;\mathbb{R}^3)}\leq 2\lVert\vec{C}\rVert_{\check{H}^1(\mathbb{R}^3)}
\end{equation*}
and
\begin{equation*}
\lVert\vec{C}\rVert_{\check{H}^1(\mathbb{R}^3)}^2=\int_{\mathbb{R}^3}(|\nabla\cdot\vec{C}|^2+|\nabla\times\vec{C}|^2)dx.
\end{equation*}

We remark that the Lawrence-Doniach energy $\mathcal{G}_{LD}^{\epsilon, s}$ considered here is different from that studied in \textcolor{black}{\cite{BK} and \cite{Chapman}}, via a simple rescaling in \textcolor{black}{the energy and the applied magnetic field $\vec{H}$.  The scaling we use here is the same as that used by Sandier and Serfaty in \cite{SS} to analyze minimizers of the two-dimensional Ginzburg-Landau energy as $\epsilon$ tends to zero.}

\textcolor{black}{More precisely, setting $\kappa=\frac{1}{\epsilon}$ and letting $\mathcal{G}^{\kappa,s,\lambda}_{ld}$ be the Lawrence-Doniach energy with applied magnetic field $\vec{H}$ studied in \cite{Chapman} with $\psi_{n}$ as the order parameter for the $n$th layer and $\vec{A}_{ld}$ as the magnetic potential for $\mathcal{G}^{\kappa,s,\lambda}_{ld}$, respectively, we have
\begin{equation}\label{LDRescaling}
\begin{cases}
\mathcal{G}_{LD}^{\epsilon,s,\lambda}(\{u_n\}_{n=0}^N, \vec{A})|_{\vec{H}=\tau \kappa \vec{e}_3}  =\frac{\kappa^2}{2}\mathcal{G}^{\kappa,s,\lambda}_{ld}(\{\psi_n\}_{n=0}^N, \vec{A}_{ld})|_{\vec{H}=\tau \vec{e}_3},\\
\text {for } u_{n}=\psi_{n} \text{ and }
\vec{A}=\kappa\vec{A}_{ld}
\end{cases}
\end{equation}
for any $\tau > 0$.  A similar rescaling holds for the anisotropic Ginzburg-Landau energy, i.e.,
\begin{equation}\label{AGLRescaling}
\begin{cases}
\mathcal{G}_{AGL}^{\epsilon,\lambda}(\psi,\vec{A})|_{\vec{H}=\tau \kappa \vec{e}_3}=\frac{\kappa^2}{2}
\mathcal{G}^{\kappa,\lambda}_{em}(\psi_{em},\vec{A}_{em})|_{\vec{H}=\tau \vec{e}_3},\\
\text {for } \psi=\psi_{em} \text{ and }
\vec{A}=\kappa\vec{A}_{em},
\end{cases}
\end{equation}
where $\mathcal{G}^{\kappa,\lambda}_{em}$ is the anisotropic Ginzburg-Landau (or effective mass) energy introduced in \cite{Chapman} with applied magnetic field $\vec{H}$, and $\psi_{em}$, $\vec{A}_{em}$ are the order parameter and the magnetic potential for $\mathcal{G}^{\kappa,\lambda}_{em}$, respectively.} The above formulas will be used in Section 6.

The analysis in \cite{BK} (after appropriate rescaling) applies here without any difficulty. In particular, we have representation formulas for $A^1, A^2, A_n^1$ and $A_n^2$ for a minimizer of the Lawrence-Doniach energy in the Coulomb gauge as in Lemma 3.1, Theorem 3.2 and Corollary 3.3 in \cite{BK}. To state these formulas, we first define the single layer potential for our setting. For each $k\in\{0,1,\dddot\ ,N\}$, and for a given function $g\in L^p(\Omega\times\{ks\})$ with $1<p<\infty$, we define the operator $S_k$ by
\begin{equation*}
[S_k(g)](x)=\int_{\Omega\times\{ks\}}\frac{c}{|x-Q|}g(Q)d\sigma(Q)
\end{equation*}
for $x$ in $\mathbb{R}^3\backslash [\overline\Omega \times \{ks\}]$ where $d\sigma$ denotes the surface measure on the plane and $c=-\frac{1}{4\pi}$. (See \cite{Fabes} and \cite{Verchota} for results on  layer potentials in smooth and Lipschitz domains, respectively.) Let $(\{u_n\}_{n=0}^N, \vec{A})\in[H^1(\Omega;\mathbb{C})]^{N+1}\times K$ be a minimizer of $\mathcal{G}_{LD}^{\epsilon, s}$.  Define
$h_k^i$ in $L^2(\Omega)$ by
\begin{equation*}
h_k^i(\hat{x})=s(\partial_{i}u_{k}-\imath A_k^iu_k,-\imath u_k)\chi_{\Omega}(\hat{x})
\end{equation*}
and define $g_k^i$ in $L^2(\Omega\times\{ks\})$ by
\begin{equation*}
g_k^i(x)=\chi_{\Omega\times\{ks\}}(x)h_k^i(\hat x)
\end{equation*}
for $i=1,2$. Then the single layer potential of $g_k^i$ is
\begin{equation}\label{Sk}
\begin{split}
[S_k(g_k^i)](x)&=\int_{\Omega\times\{ks\}}\frac{c}{|x-Q|}g_k^i(Q)d\sigma(Q)\\
&=\int_{\Omega}\frac{c}{|x-(\hat{y},ks)|}h_k^i(\hat{y})d\hat{y}.
\end{split}
\end{equation}
With the above definitions, from the formulas in \cite{BK}, we have
\begin{equation}\label{A}
A^i(x)-h_{ex}a^i(x)=\sum_{k=0}^{N}S_k(g_k^i)(x) \text{ in } L^2_{loc}(\mathbb{R}^3)
\end{equation}
and
\begin{equation}\label{An}
A_n^i(\hat x)-h_{ex}a_n^i(\hat x)=t_n^i(\hat{x},ns)+\sum_{\substack{k=0\\k\ne n}}^{N}S_k(g_k^i)(\hat{x},ns) \text{ a.e. in } \mathbb{R}^2
\end{equation}
for $i=1,2$, where $t_n^i(\hat{x},ns)$ is the trace of $[S_n(g_n^i)](x)$ on $\mathbb{R}^2\times\{ns\}$, which is given by
\begin{equation*}
\begin{split}
t_n^i(\hat{x},ns)&=\int_{\Omega\times\{ns\}}\frac{c}{|(\hat{x},ns)-Q|}g_n^i(Q)d\sigma(Q)\\
&=\int_{\Omega}\frac{c}{|\hat{x}-\hat{y}|}h_n^i(\hat{y})d\hat{y},
\end{split}
\end{equation*}
and $a_n^i(\hat x)=a^i(\hat x,ns)$ corresponds to the trace of $a^i$ in $\mathbb{R}^2\times\{ns\}$.

In order to state further properties that will be used later, we need some definitions and results
from \cite{BK} (based on the theory of single layer potentials in \cite{Fabes} and \cite{Verchota})
concerning nontangential limits and nontangential maximal functions. For fixed $R>0$ and $0<\theta<\pi/2$, let
\begin{equation*}
\Gamma:=\Gamma_{R,\theta}=\{x\in\mathbb{R}^3:|x|<R \text{ and } |x\cdot\vec{e}_3|>|x|cos\theta\}
\end{equation*}
be a cone nontangential to the plane $\{x_3=0\}$ with vertex at the origin. Define
\begin{equation*}
\Gamma^+=\{x\in\Gamma:x_3>0\} \text{ and } \Gamma^-=\{x\in\Gamma:x_3<0\}.
\end{equation*}
For $(\hat{x},ns)\in\mathbb{R}^2\times\{ns\}$, let
\begin{equation*}
\Gamma(\hat{x},ns)=\{y\in\mathbb{R}^3:y-(\hat{x},ns)\in\Gamma\}.
\end{equation*}
Similarly, define
\begin{equation*}
\Gamma^+(\hat{x},ns)=\{y\in\mathbb{R}^3:y-(\hat{x},ns)\in\Gamma^+\}
\end{equation*}
and
\begin{equation*}
\Gamma^-(\hat{x},ns)=\{y\in\mathbb{R}^3:y-(\hat{x},ns)\in\Gamma^-\}.
\end{equation*}
For a function $u$ defined in $\Gamma(\hat{x},ns)$, define the nontangential limit (n.t.limit) of $u(y)$ as $y\rightarrow(\hat{x},ns)$ by
\begin{equation*}
\underset{y\rightarrow(\hat{x},ns)}{\text{n.t.limit }}u(y)=\lim\limits_{y\rightarrow(\hat{x},ns)}\{u(y):y\in\Gamma(\hat{x},ns)\},
\end{equation*}
provided the limit exists. Also we have the following definition of the nontangential maximal function of $u$ at $(\hat{x},ns)$, denoted by $u^*(\hat{x},ns)=u^*_{R,\theta}(\hat x,ns)$ for each $n$ in $\{0,1, \cdots, N\}$.
\begin{equation*}
u^*(\hat{x},ns)=\sup\{|u(y)|:y\in\Gamma_{R,\theta}(\hat{x},ns)\}.
\end{equation*}
By Theorem 3.2 in \cite{BK}, $S_n(g_n^i)\in W_{loc}^{1,2}(\mathbb{R}^3)\cap C^{\infty}(\mathbb{R}^3\setminus\overline{\Omega}_n)$ and $t_n^i(\hat x,ns)\in W_{loc}^{1,2}(\mathbb{R}^2\times\{ns\})$.

Throughout this paper, we let $\theta=\frac {\pi} {4}$ and $R=1$.  Also, let $R_0$ be a fixed constant satisfying
\begin{equation}\label{r0}
R_0 \geq 2(\text{diam }\Omega),
\end{equation}
where $\text{diam } \Omega$ is the diameter of $\Omega$.  It follows that the nontangential maximal functions of $S_n(g_n^i)$ and $\nabla S_n(g_n^i)$ are in $L^2_{loc}(\mathbb{R}^2\times\{ns\})$ and
\begin{equation}\label{nontangential_maximal}
\lVert(S_n(g_n^i))^*\rVert_{L^2(\Omega\times\{ns\})} + \lVert(\nabla S_n(g_n^i))^*\rVert_{L^2(\Omega\times\{ns\})}\leq C\lVert g_n^i\rVert_{L^2(\Omega\times\{ns\})}
\end{equation}
for some constant $C$ depending only on $R_0$. (This property of the constant $C$ uses the fact that $\Omega$ is a subset of a disk of radius $R_0$ in $\mathbb{R}^2$. See \cite{BK}.)
Also $t_n^i(\hat x,ns)$ and $\hat{\nabla}t_n^i(\hat x,ns)$ are the nontangential limits of $S_n(g_n^i)$ and $\hat{\nabla}S_n(g_n^i)$, respectively, pointwise a.e. in $\mathbb{R}^2\times\{ns\}$ and in $L^2_{loc}(\mathbb{R}^2\times\{ns\})$, and we have
\begin{equation*}
\nabla S_n(g_n^i)(x)=\int_{\mathbb{R}^2}\frac{-c(x-(\hat{y},ns))}{|x-(\hat{y},ns)|^3}h_n^i(\hat{y})d\hat{y} \text{ a.e. in } \mathbb{R}^3
\end{equation*}
and
\begin{equation*}
(\hat{\nabla}t_n^i)(\hat{x},ns)=P.V.\int_{\mathbb{R}^2}\frac{-c(\hat{x}-\hat{y})}{|\hat{x}-\hat{y}|^3}h_n^i(\hat{y})d\hat{y} \text{ a.e. in } \mathbb{R}^2\times\{ns\},
\end{equation*}
where $P.V.$ denotes the principal-valued integral. In addition, we have
\begin{equation}\label{grad_tn}
\lVert t_n^i\rVert_{L^2(\Omega)}+\lVert\hat{\nabla}t_n^i\rVert_{L^2(\Omega)}\leq C\lVert h_n^i\rVert_{L^2(\Omega)}
\end{equation}
for some constant $C$ depending only on $R_0$.

The above representation formulas and properties of the single layer potential will be used in Section 4 in our proof of the lower bound for the minimum Lawrence-Diniach energy.

We conclude this section with the following proposition concerning the minimum of the energies $GL_{\epsilon}$ and $F_{\epsilon}$ over bounded simply connected Lipschitz domains, which is a modification of Proposition 3.4 in \cite{SS}.

\begin{proposition}
Assume that $\Omega\subset\mathbb{R}^2$ is a bounded simply connected Lipschitz domain. Let
\begin{equation*}
X=\{(v,b)\in H^1(\Omega;\mathbb{C})\times H^1_{loc}(\mathbb{R}^2;\mathbb{R}^2):(\textnormal{curl }b-h_{ex})\in L^2(\mathbb{R}^2)\}
\end{equation*}
and
\begin{equation*}
X_{\Omega}=\{(v,b)\in H^1(\Omega;\mathbb{C})\times H^1(\Omega;\mathbb{R}^2)\}.
\end{equation*}
Then we have
\begin{equation*}
\min_{(v,b)\in X}GL_{\epsilon}(v,b)=\min_{(v,b)\in X_{\Omega}}F_{\epsilon}(v,b).
\end{equation*}
\end{proposition}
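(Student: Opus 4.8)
The plan is to prove the equality by establishing the two inequalities separately, relying on the direct method to guarantee that both minima are attained (coercivity follows after fixing a Coulomb-type gauge, and the functionals are weakly lower semicontinuous). Write $m:=\min_{X}GL_\epsilon$ and $m_\Omega:=\min_{X_\Omega}F_\epsilon$, and set $\Omega^c:=\mathbb R^2\setminus\overline\Omega$, which is connected and unbounded since $\Omega$ is bounded and simply connected. The inequality $m\ge m_\Omega$ is immediate by restriction: given $(v,b)\in X$, boundedness of $\Omega$ gives $b|_\Omega\in H^1(\Omega;\mathbb R^2)$, so $(v,b|_\Omega)\in X_\Omega$, and splitting the magnetic term yields
\[
GL_\epsilon(v,b)=F_\epsilon(v,b|_\Omega)+\tfrac12\int_{\Omega^c}(\hat{\textnormal{curl}}\,b-h_{ex})^2\,d\hat x\ \ge\ F_\epsilon(v,b|_\Omega)\ \ge\ m_\Omega ,
\]
since the discarded integral is nonnegative; minimizing over $X$ gives $m\ge m_\Omega$.

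For the reverse inequality $m\le m_\Omega$ I would take a minimizer $(v,b)\in X_\Omega$ of $F_\epsilon$ and build admissible extensions of $b$ to $\mathbb R^2$ whose exterior magnetic energy tends to zero, leaving $v$ unchanged and the field equal to $b$ on $\Omega$ (so the traces match across $\partial\Omega$ and the glued field is genuinely in $H^1_{loc}(\mathbb R^2)$). Fix the smooth field $\hat a(\hat x)=(0,x_1)$, for which $\hat{\textnormal{curl}}\,\hat a\equiv 1$. Since $\Omega$ is Lipschitz, I extend $b-h_{ex}\hat a\in H^1(\Omega;\mathbb R^2)$ by a Sobolev extension to a compactly supported $W_1\in H^1(\mathbb R^2;\mathbb R^2)$ with $W_1=b-h_{ex}\hat a$ on $\Omega$, so that $\hat{\textnormal{curl}}\,W_1\in L^2(\Omega^c)$. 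The key observation is that one need \emph{not} force the exterior curl to equal $h_{ex}$ exactly: it suffices to correct $W_1$ on $\Omega^c$, without changing its trace on $\partial\Omega$, so as to make its curl small in $L^2(\Omega^c)$. Setting $\tilde b=h_{ex}\hat a+W_1+\psi$ with $\psi\in C_c^\infty(\Omega^c;\mathbb R^2)$ gives $\tilde b=b$ on $\Omega$, $\tilde b\in H^1_{loc}(\mathbb R^2;\mathbb R^2)$, and on $\Omega^c$ we have $\hat{\textnormal{curl}}\,\tilde b-h_{ex}=\hat{\textnormal{curl}}\,W_1+\hat{\textnormal{curl}}\,\psi\in L^2(\Omega^c)$, so $(v,\tilde b)\in X$ and
\[
GL_\epsilon(v,\tilde b)=F_\epsilon(v,b)+\tfrac12\int_{\Omega^c}(\hat{\textnormal{curl}}\,W_1+\hat{\textnormal{curl}}\,\psi)^2\,d\hat x .
\]

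Thus $m\le m_\Omega$ follows once I show that $\{\hat{\textnormal{curl}}\,\psi:\psi\in C_c^\infty(\Omega^c;\mathbb R^2)\}$ is dense in $L^2(\Omega^c)$: I may then choose $\psi_k$ with $\hat{\textnormal{curl}}\,\psi_k\to-\hat{\textnormal{curl}}\,W_1$ in $L^2(\Omega^c)$, forcing $GL_\epsilon(v,\tilde b_k)\to F_\epsilon(v,b)=m_\Omega$ and hence $m\le m_\Omega$. This density is elementary: if $f\in L^2(\Omega^c)$ is orthogonal to every $\hat{\textnormal{curl}}\,\psi$, then integrating by parts against $\psi=(\psi^1,\psi^2)\in C_c^\infty(\Omega^c;\mathbb R^2)$ yields $\partial_2 f=0$ and $\partial_1 f=0$ as distributions on $\Omega^c$, so $f$ is constant on the connected set $\Omega^c$, and being in $L^2$ of the unbounded set $\Omega^c$ it must vanish. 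Combining the two inequalities gives $m=m_\Omega$.

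I expect the reverse inequality to be the heart of the matter, and specifically the decision to drive the exterior energy to zero by approximation rather than to construct a single curl-free exterior field matching the full trace of $b$. In the smooth case of Proposition~3.4 of \cite{SS} one can produce such an exact extension, so that the exterior integral is literally zero, because the vector trace in $H^{1/2}(\partial\Omega;\mathbb R^2)$ can be realized as the boundary gradient of an $H^2$ potential. For a merely Lipschitz boundary the outward normal is only in $L^\infty$, so decomposing the trace into tangential and normal parts and matching both by a gradient need not stay within $H^1$, and an exact curl-free extension may be unavailable; the density argument above sidesteps this difficulty and is precisely where the Lipschitz hypothesis is accommodated.
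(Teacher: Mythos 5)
Your proof is correct, but the route through the reverse inequality is genuinely different from the paper's. The paper never extends $b$ itself: given a minimizer $(v,b)$ of $F_\epsilon$, it solves $\hat{\Delta}\phi=(\textnormal{curl}\,b-h_{ex})\chi_\Omega$ via the Newtonian potential, sets $\tilde b=(-\partial_2\phi,\partial_1\phi)+(0,h_{ex}x_1)$, and observes that $\textnormal{curl}\,\tilde b-h_{ex}$ vanishes identically outside $\Omega$ while $\textnormal{curl}\,\tilde b=\textnormal{curl}\,b$ inside; simple connectivity of $\Omega$ then gives $\tilde b|_\Omega=b+\nabla f$ with $f\in H^2(\Omega)$, and the gauge transformation $\tilde v=ve^{if}$ produces a single competitor with $GL_\epsilon(\tilde v,\tilde b)=F_\epsilon(v,b)$ exactly. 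You instead keep $(v,b)$ untouched on $\Omega$, extend $b-h_{ex}\hat a$ by a Sobolev extension (legitimate for Lipschitz $\Omega$), and drive the exterior magnetic energy to zero using the density of $\{\textnormal{curl}\,\psi:\psi\in C_c^\infty(\mathbb{R}^2\setminus\overline\Omega;\mathbb{R}^2)\}$ in $L^2(\mathbb{R}^2\setminus\overline\Omega)$; your annihilator argument for that density is sound, resting on the exterior being connected and unbounded. What the paper's construction buys is an exact competitor, so attainment of $\min_X GL_\epsilon$ comes for free from a minimizer of $F_\epsilon$; your argument yields only a minimizing sequence, hence equality of infima, so the equality of the stated minima does additionally rely on the direct-method existence you invoke at the outset (which is standard). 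What your argument buys is the avoidance of potential theory and of any gauge transformation. Note, though, that your closing speculation about why Lipschitz regularity is delicate does not reflect the paper's actual proof, which likewise sidesteps all trace matching on $\partial\Omega$ and works verbatim for Lipschitz domains.
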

\begin{proof}
Note that for any function $(v,b)$ in $X$, we have $(v,b|_{\Omega}) \in X_{\Omega}$. From this and the definitions of $GL_{\epsilon}$ and $F_{\epsilon}$, we obtain
$$\min_{(v,b)\in X}GL_{\epsilon}(v,b)\geq\min_{(v,b)\in X_{\Omega}}F_{\epsilon}(v,b).$$
Given a minimizer $(v,b)\in X_{\Omega}$ of $F_{\epsilon}$, let $\phi$ solve $\Delta\phi=H$, where
$H=(\text{curl }b-h_{ex})\cdot\chi_{\Omega}\in L^2({\mathbb{R}^2})$. We may take $\phi$ to be the Newtonian potential of $H$. By standard estimates for Newtonian potentials, we have $\phi\in H^2_{loc}(\mathbb{R}^2)$. Define $\tilde{b}=(-\partial_2\phi,\partial_1\phi)+(0,h_{ex}x_1)\in H^1_{loc}(\mathbb{R}^2;\mathbb{R}^2)$. Direct calculations show that $\text{curl }(\tilde b|_{\Omega})=\text{curl }b$ in $\Omega$, where $\tilde b|_{\Omega}$ is the restriction of $\tilde b$ on $\Omega$. Therefore there exists $f\in H^2(\Omega)$ such that $\tilde{b}|_{\Omega}=b+\nabla f$ in $\Omega$. Define $\tilde v=ve^{if}$. Simple calculations using the formula above for $H$ and gauge invariance imply that

$$GL_{\epsilon}(\tilde v,\tilde{b})=F_{\epsilon}(\tilde v,\tilde{b}|_{\Omega})=F_{\epsilon}(v,b)=\min_{(v,b)\in X_{\Omega}}F_{\epsilon}(v,b).$$
Hence $\min_{(v,b)\in X} GL_{\epsilon}(v,b) \leq \min_{(v,b)\in X_{\Omega}} F_{\epsilon}(v,b)$ and equality must hold.
\end{proof}

\section{Upper bound}

From here on in the paper, we let $C_0$ denote any constant that is independent of $\epsilon$, $s$, $\Omega$, $L$, $D$, $\lambda$, and $R_0$ for all $\epsilon$ and $s$ sufficiently small.  Recall that in our notation, $\hat{V}$ denotes a two-dimensional vector $V=(V_1,V_2)$ and  $\vec{W}$ denotes a three-dimensional vector $W=(W_1,W_2,W_3)$.  Also $\hat {\text{curl }} {\hat V}=\partial_{1}V_2-\partial_{2} V_1$.  We will denote by $\hat{\nabla}$ the operator $(\partial_{1},\partial_{2})$.

In this section we prove the upper bound on the minimum Lawrence-Doniach energy. The main idea in the proof is to construct a test configuration with the induced magnetic field $\nabla\times\vec A$ identically equal to the applied magnetic field $h_{ex}\vec e_3$, such that each $u_n$ is a rescaling of a function which minimizes a simplified two-dimensional Ginzburg-Landau energy introduced by Kachmar \cite{Ka}. For such a test configuration, its Lawrence-Doniach energy becomes $N+1$ identical copies of a two-dimensional Ginzburg-Landau energy that was studied in detail in \cite{Ka}. Using the sharp upper bound estimate for this two-dimensional Ginzburg-Landau energy, we are led to the sharp upper bound for the minimum Lawrence-Doniach energy.

\begin{theorem}\label{UB}
Assume $|\ln\epsilon|\ll h_{ex}\ll\frac{1}{\epsilon^2}$ as $\epsilon\rightarrow 0$. Let $(\{u_n\}_{n=0}^N,\vec{A})$ be a minimizer of $\mathcal{G}_{LD}^{\epsilon,s}$. Then we have
\begin{equation*}
\mathcal{G}_{LD}^{\epsilon,s}(\{u_n\}_{n=0}^N,\vec{A})\leq \frac{|D|}{2}h_{ex} (\ln\frac{1}{\epsilon\sqrt{h_{ex}}})\bigl(1+c(\epsilon,s)\bigr)
\end{equation*}
\textcolor{black}{for all $\epsilon$ sufficiently small}, where
\begin{equation*}
c(\epsilon,s)=\frac{s}{L}+o_{\epsilon}(1)=o_{\epsilon,s}(1).
\end{equation*}
\end{theorem}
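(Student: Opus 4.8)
The plan is to use the variational characterization: since $(\{u_n\}_{n=0}^N,\vec A)$ minimizes $\mathcal G_{LD}^{\epsilon,s}$ over $[H^1(\Omega;\mathbb C)]^{N+1}\times K$, and since every configuration in $E$ is gauge-equivalent to one in $K$ with the same energy, it suffices to exhibit a single admissible test configuration in $E$ whose energy is at most $\frac{|D|}{2}h_{ex}(\ln\frac1{\epsilon\sqrt{h_{ex}}})(1+c(\epsilon,s))$. First I would invoke Proposition 2.1 together with the upper-bound construction of Sandier and Serfaty (Chapter 8 of \cite{SS}) to fix a two-dimensional configuration $(u,\hat A)\in X$ with $\hat{\text{curl}}\hat A=h_{ex}$ in $\mathbb R^2\setminus\Omega$ and $GL_\epsilon(u,\hat A)=F_\epsilon(u,\hat A)\le \frac{|\Omega|}{2}h_{ex}\ln\frac1{\epsilon\sqrt{h_{ex}}}+Ch_{ex}$. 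Writing $\hat A=h_{ex}\hat a+\hat B$ with $\hat a=(0,x_1)$ and $\nabla\cdot\hat B=0$, so that $\hat{\text{curl}}\hat B=\hat{\text{curl}}\hat A-h_{ex}\in L^2(\mathbb R^2)$ is supported in $\overline\Omega$, I would then define the test configuration by placing a copy of $u$ on every layer, $v_n:=u$ for $n=0,\dots,N$, and by extending $\hat A$ to $\mathbb R^3$ as
\[
\vec B(\hat x,x_3)=h_{ex}\vec a(\hat x)+\eta(x_3)(\hat B(\hat x),0),
\]
where $\eta$ is a smooth cutoff equal to $1$ on $[0,L]$ and vanishing outside $[-\delta,L+\delta]$. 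In particular $B^3\equiv 0$, and the trace of $(B^1,B^2)$ on each layer $\{x_3=ns\}$ equals $\hat A$.

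With this choice the three terms of $\mathcal G_{LD}^{\epsilon,s}$ are estimated separately. Because $v_n=u$ and $\hat A_n=\hat A$ on every layer, the first term equals $(N+1)s\,I_1$, where $I_1=\int_\Omega[\frac12|\hat\nabla_{\hat A}u|^2+\frac1{4\epsilon^2}(1-|u|^2)^2]$ and $(N+1)s=L+s$. Because $v_{n+1}=v_n$ and $B^3\equiv 0$ give $v_{n+1}-v_n e^{\imath\int_{ns}^{(n+1)s}B^3\,dx_3}=0$, the Josephson coupling term vanishes identically. For the magnetic term, a direct computation gives
\[
\nabla\times\vec B-h_{ex}\vec e_3=\eta(x_3)(0,0,\hat{\text{curl}}\hat B)+\eta'(x_3)(-B^2,B^1,0),
\]
so that $|\nabla\times\vec B-h_{ex}\vec e_3|^2=\eta^2(\hat{\text{curl}}\hat B)^2+(\eta')^2|\hat B|^2$, and integrating first in $x_3$ yields
\[
\tfrac12\!\int_{\mathbb R^3}\!|\nabla\times\vec B-h_{ex}\vec e_3|^2dx=\tfrac{1}{2}\Big(\!\int_{\mathbb R}\eta^2\Big)\!\int_{\mathbb R^2}\!(\hat{\text{curl}}\hat A-h_{ex})^2+\tfrac12\Big(\!\int_{\mathbb R}(\eta')^2\Big)\!\int_{\mathbb R^2}\!|\hat B|^2.
\]
Since $\int_{\mathbb R}\eta^2=L+O(\delta)$, the first piece is $L\cdot(\text{2D magnetic energy})+O(\delta\,h_{ex})$, which together with $(N+1)s\,I_1=(L+s)I_1$ reassembles into $L\,F_\epsilon(u,\hat A)+s\,I_1\le L\,F_\epsilon+s\,F_\epsilon$.

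Assembling, $\mathcal G_{LD}^{\epsilon,s}(\{v_n\},\vec B)\le L\,F_\epsilon+s\,F_\epsilon+(\text{transition cost})$; using $|D|=L|\Omega|$ and the bound on $F_\epsilon$, one has $L\,F_\epsilon\le \frac{|D|}{2}h_{ex}\ln\frac1{\epsilon\sqrt{h_{ex}}}+CLh_{ex}$ and $s\,F_\epsilon\le \frac sL\cdot\frac{|D|}{2}h_{ex}\ln\frac1{\epsilon\sqrt{h_{ex}}}+Csh_{ex}$, which produce the $\frac sL$ term and, after writing $h_{ex}=(\ln\frac1{\epsilon\sqrt{h_{ex}}})^{-1}\,h_{ex}\ln\frac1{\epsilon\sqrt{h_{ex}}}$, the $\frac{C}{\ln\frac1{\epsilon\sqrt{h_{ex}}}}$ term in $c(\epsilon,s)$. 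The hard part, and the heart of the argument, is controlling the transition cost $\frac12(\int(\eta')^2)\int_{\mathbb R^2}|\hat B|^2+O(\delta\,h_{ex})$ and showing it is $O(h_{ex})$. The delicate point is that $\hat B=\hat A-h_{ex}\hat a$ decays only like $1/|\hat x|$ in the exterior of $\Omega$ (its curl carries nonzero total mass), so $\|\hat B\|_{L^2(\mathbb R^2)}$ is \emph{not} finite; one must therefore also truncate $\hat B$ in the in-plane variable and balance the cutoff length $\delta$ and the truncation radius against the smallness of the two-dimensional magnetic energy $\frac12\int_\Omega(\hat{\text{curl}}\hat A-h_{ex})^2$ in the regime $h_{ex}\gg|\ln\epsilon|$, so that the total cost of reverting to the applied field outside the slab stays of lower order $O(h_{ex})$. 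Verifying admissibility of the test pair ($\vec B\in E$, i.e. finite magnetic energy and $\vec B\in H^1_{loc}(\mathbb R^3;\mathbb R^3)$, with $v_n\in H^1(\Omega;\mathbb C)$) is then routine.
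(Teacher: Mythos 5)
Your construction coincides with the paper's in all essential respects: stack $N+1$ identical copies of the Sandier--Serfaty two-dimensional test configuration on the layers, take $B^3\equiv 0$ and equal order parameters so the Josephson term vanishes identically, and extend the in-plane potential off the slab with a cutoff in $x_3$; the reassembly into $(L+s)F_\epsilon$ plus a transition cost, producing the $\frac{s}{L}$ and $\frac{C}{\ln\frac{1}{\epsilon\sqrt{h_{ex}}}}$ contributions to $c(\epsilon,s)$, is also the paper's. However, there is a genuine gap exactly where you yourself locate ``the hard part.'' As written your test field is not admissible, since $\int_{\mathbb{R}^2}|\hat B|^2=\infty$, and the repair is only gestured at. Moreover your proposed mechanism --- balancing the vertical cutoff length $\delta$ and an in-plane truncation radius against ``the smallness of the two-dimensional magnetic energy'' --- misdiagnoses what closes the argument. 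The paper needs no balancing at all: it inserts a second cutoff $\xi(\hat x)$ supported in a disk of \emph{fixed} radius $R+1$ comparable to $\mathrm{diam}\,\Omega$ and takes the vertical transition length $d$ \emph{fixed}. What makes this work, and what is absent from your sketch, is the quantitative estimate (Lemma 3.2, via the periodicity of $h_\epsilon$ and Proposition 3.2 of \cite{SS1})
\[
\lVert h_\epsilon^{\hat x_0}-h_{ex}\rVert_{L^2(\Omega)}^2\leq C_0\,|\Omega|\,h_{ex},
\]
which propagates through the Newtonian potential $\phi$ of $H=(h_\epsilon^{\hat x_0}-h_{ex})\chi_\Omega$ to give $\lVert\hat\nabla\phi\rVert_{L^2(B_{R+1})}^2\leq Ch_{ex}$ and $\int_{\mathbb{R}^2}(\hat\nabla\xi\cdot\hat\nabla\phi)^2\leq Ch_{ex}$ (Lemmas 3.3 and 3.5). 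Every truncation cost is then $O(h_{ex})$ outright. Note that the two-dimensional magnetic energy $\frac12\int_\Omega(\hat{\text{curl}}\hat A-h_{ex})^2$ is \emph{not} small --- it is itself of order $h_{ex}$; the only point is that $h_{ex}\ll h_{ex}\ln\frac{1}{\epsilon\sqrt{h_{ex}}}$, so all $O(h_{ex})$ costs are absorbed into the $\frac{C}{\ln\frac{1}{\epsilon\sqrt{h_{ex}}}}$ term.

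A secondary loose end: you assert that the trace of $(B^1,B^2)$ on each layer equals $\hat A$. With the divergence-free (Newtonian-potential) representative this holds only up to a gradient; since $\Omega$ is simply connected one writes $\hat B_\Omega=\hat A+\hat\nabla f$ with $f\in H^2(\Omega)$ and must replace $u$ by $ue^{\imath f}$ on every layer before the first term of the energy reduces to $(N+1)s$ copies of the two-dimensional bulk energy. That gauge-matching step is routine but cannot be skipped.
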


\begin{proof}
First we introduce some \textcolor{black}{notation} defined in \cite{Ka}. Let $K=(-\frac{1}{2},\frac{1}{2})\times(-\frac{1}{2},\frac{1}{2})$.  We define the following simplified two-dimensional Ginzburg-Landau energy
\begin{equation*}
E^{2D}(u)=\frac{1}{2}\int_{K}\left[|(\hat{\nabla}-\imath h_{ex}\hat{a})u|^2+\frac{1}{2\epsilon^2}(1-|u|^2)^2\right]d\hat x,
\end{equation*}
where $\hat a(\hat x)=\frac{1}{2}(-x_2,x_1)$, and the space
\begin{equation*}
E_{h_{ex}}:=\{u\in H^1_{loc}(\mathbb{R}^2;\mathbb{C}):u(x_1+1,x_2)=e^{\imath h_{ex}\frac{x_2}{2}}u(x_1,x_2) \text{ and } u(x_1,x_2+1)=e^{-\imath h_{ex}\frac{x_1}{2}}u(x_1,x_2)\}.
\end{equation*}
Let
\begin{equation*}
m_{p}(h_{ex},\epsilon)=\inf\{E^{2D}(u):u\in E_{h_{ex}}\}.
\end{equation*}
By Theorem 2.2 in \cite{Ka}, for $h_{ex}$ satisfying $|\ln\epsilon|\ll h_{ex}\ll\frac{1}{\epsilon^2}$, we have
\begin{equation}\label{3.1.1}
m_{p}(h_{ex},\epsilon)=\frac{h_{ex}}{2}\ln\frac{1}{\epsilon\sqrt{h_{ex}}}\bigl(1+o_{\epsilon}(1)\bigr)
\end{equation}
as $\epsilon\rightarrow 0$.

Let $\tilde{u}\in E_{h_{ex}}$ be a minimizer of $E^{2D}$. Let $b=\epsilon^2 h_{ex}$ and $l=(\frac{h_{ex}}{|\ln\epsilon|})^{\frac{1}{4}}\frac{1}{\sqrt{h_{ex}}}$. It is easy to check, using the assumption $|\ln\epsilon|\ll h_{ex}\ll\frac{1}{\epsilon^2}$, that $b\ll 1$, $l\ll 1$ and $l\sqrt{h_{ex}}\gg 1$. Define $\tilde h_{ex}=\frac{1}{l^2}$ and $\tilde\epsilon=\sqrt{b}l$. It follows that $|\ln\tilde{\epsilon}|\ll \tilde h_{ex}\ll\frac{1}{\tilde\epsilon^2}$. Define $u(\hat x)=\tilde u(l\sqrt{h_{ex}}\hat x)$. We construct a test configuration $(\{v_n\}_{n=0}^N,\vec B)$ for the Lawrence-Doniach energy $\mathcal{G}_{LD}^{\epsilon,s}$ such that
\begin{equation*}
\begin{cases}
v_n(\hat x)=u(\hat x) \text{ for all } n=0,1,\dddot\ ,N,\\
\vec B(x)=\frac{h_{ex}}{2}(-x_2,x_1,0).
\end{cases}
\end{equation*}

Since $\nabla\times\vec B=h_{ex}\vec e_3$ and $B^3=0$, it is clear that
\begin{equation}\label{3.1.2}
\mathcal{G}_{LD}^{\epsilon,s}(\{v_n\}_{n=0}^N,\vec B)=(N+1)\frac{s}{2}\int_{\Omega}\left[|(\hat{\nabla}-\imath h_{ex}\hat{a})u|^2+\frac{1}{2\epsilon^2}(1-|u|^2)^2\right]d\hat x.
\end{equation}
Let $K_{\epsilon}=(-\frac{1}{2l\sqrt{h_{ex}}},\frac{1}{2l\sqrt{h_{ex}}})\times(-\frac{1}{2l\sqrt{h_{ex}}},\frac{1}{2l\sqrt{h_{ex}}})$. The change of variable $\hat y=l\sqrt{h_{ex}}\hat x$ yields
\begin{equation*}
\begin{split}
\frac{1}{2}\int_{K_{\epsilon}}&\left[|(\hat{\nabla}-\imath h_{ex}\hat{a}(\hat x))u(\hat x)|^2+\frac{1}{2\epsilon^2}(1-|u(\hat x)|^2)^2\right]d\hat x\\
&=\frac{1}{2}\int_{K}\left[|(\hat{\nabla}-\imath \tilde h_{ex}\hat{a}(\hat y))\tilde u(\hat y)|^2+\frac{1}{2\tilde\epsilon^2}(1-|\tilde u(\hat y)|^2)^2\right]d\hat y.
\end{split}
\end{equation*}
Using the fact that $\tilde{u}\in E_{h_{ex}}$ is a minimizer of $E^{2D}$ and \eqref{3.1.1}, we have
\begin{equation}\label{3.1.3}
\frac{1}{2}\int_{K_{\epsilon}}\left[|(\hat{\nabla}-\imath h_{ex}\hat{a}(\hat x))u(\hat x)|^2+\frac{1}{2\epsilon^2}(1-|u(\hat x)|^2)^2\right]d\hat x=\frac{\tilde h_{ex}}{2}\ln\frac{1}{\tilde\epsilon\sqrt{\tilde h_{ex}}}\bigl(1+o_{\tilde\epsilon}(1)\bigr)
\end{equation}
as $\epsilon\rightarrow 0$.

Let $L_{\epsilon}$ be the lattice in $\mathbb{R}^2$ generated by $K_{\epsilon}$. Let $\{K_i\}$ be the collection of squares formed by the lattice $L_{\epsilon}$ such that $K_i\cap\Omega\ne\emptyset$ and $\Omega\subset\bigcup_{i}K_i$. Simple calculations using the definitions of the space $E_{h_{ex}}$ and the function $u$ yield that
\begin{equation*}
\frac{1}{2}\int_{K_i}\left[|(\hat{\nabla}-\imath h_{ex}\hat{a})u|^2+\frac{1}{2\epsilon^2}(1-|u|^2)^2\right]d\hat x=\frac{1}{2}\int_{K_{\epsilon}}\left[|(\hat{\nabla}-\imath h_{ex}\hat{a})u|^2+\frac{1}{2\epsilon^2}(1-|u|^2)^2\right]d\hat x.
\end{equation*}
Therefore we have
\begin{equation}\label{3.1.5}
\begin{split}
&\frac{1}{2}\int_{\Omega}\left[|(\hat{\nabla}-\imath h_{ex}\hat{a})u|^2+\frac{1}{2\epsilon^2}(1-|u|^2)^2\right]d\hat x\\
&\quad\quad\quad\quad\quad\quad\leq\sum\limits_{i}\frac{1}{2}\int_{K_i}\left[|(\hat{\nabla}-\imath h_{ex}\hat{a})u|^2+\frac{1}{2\epsilon^2}(1-|u|^2)^2\right]d\hat x\\
&\quad\quad\quad\quad\quad\quad=\frac{\sum\limits_{i}|K_i|}{|K_{\epsilon}|}\frac{1}{2}\int_{K_{\epsilon}}\left[|(\hat{\nabla}-\imath h_{ex}\hat{a})u|^2+\frac{1}{2\epsilon^2}(1-|u|^2)^2\right]d\hat x.
\end{split}
\end{equation}
Since $\sum\limits_{i}|K_i|=|\Omega|(1+o_{\epsilon}(1))$ as $\epsilon\rightarrow 0$, we deduce from \eqref{3.1.5} and \eqref{3.1.3} that
\begin{equation*}
\frac{1}{2}\int_{\Omega}\left[|(\hat{\nabla}-\imath h_{ex}\hat{a})u|^2+\frac{1}{2\epsilon^2}(1-|u|^2)^2\right]d\hat x\leq\frac{|\Omega|}{|K_{\epsilon}|}\frac{\tilde h_{ex}}{2}\ln\frac{1}{\tilde\epsilon\sqrt{\tilde h_{ex}}}\bigl(1+o_{\epsilon}(1)\bigr).
\end{equation*}
Using $|K_{\epsilon}|=\frac{1}{l^2h_{ex}}$, $\tilde h_{ex}=\frac{1}{l^2}$ and $\tilde\epsilon=\sqrt{b}l$, we conclude that
\begin{equation}\label{3.1.6}
\frac{1}{2}\int_{\Omega}\left[|(\hat{\nabla}-\imath h_{ex}\hat{a})u|^2+\frac{1}{2\epsilon^2}(1-|u|^2)^2\right]d\hat x\leq\frac{|\Omega|}{2}h_{ex}\ln\frac{1}{\epsilon\sqrt{h_{ex}}}\bigl(1+o_{\epsilon}(1)\bigr).
\end{equation}
Combining \eqref{3.1.2} with \eqref{3.1.6} we obtain
\begin{equation*}
\mathcal{G}_{LD}^{\epsilon,s}(\{v_n\}_{n=0}^N,\vec B)\leq\frac{|D|}{2}h_{ex}\ln\frac{1}{\epsilon\sqrt{h_{ex}}}\bigl(1+o_{\epsilon}(1)+\frac{s}{L}\bigr).
\end{equation*}
\end{proof}

\section{An a priori estimate}

In this section we \textcolor{black}{prove the inequality \eqref{Trace} for the magnetic potential $\vec A$.  (See Theorem 4.3.) Throughout this section, we assume only that $h_{ex}>0$, $\epsilon>0$ and $(\{u_n\}_{n=0}^N,\vec A)\in[H^1(\Omega;\mathbb{C})]^{N+1}\times K$ is a minimizer of the Lawrence-Doniach energy $\mathcal{G}_{LD}^{\epsilon,s}=\mathcal{G}_{LD}^{\epsilon,s,\lambda}$. Recall that
$\lambda > 0$ is assumed to be fixed but arbitrary throughout this paper.}

First note that since $\vec{a}(x)=\frac{1}{2}(-x_2,x_1,0)$ is independent of $x_3$, we have $$\hat{A}(\hat x,x_3)-\hat{A}_n(\hat x)=\left( \hat{A}(\hat x,x_3)-h_{ex}\hat{a}(\hat x,x_3)\right) -\left( \hat{A}_n(\hat x)-h_{ex}\hat{a}_n(\hat x)\right) $$
for $(\hat{x},x_3)$ in $\Omega\times[ns,(n+1)s)$. Therefore
\begin{equation*}
\begin{split}
\hat{\text{curl}}\hat{A}(\hat x,x_3)-\hat{\text{curl}}\hat{A}_n(\hat x)=&\hat{\text{curl}} (\hat{A}-h_{ex}\hat{a})(\hat{x},x_3)-\hat{\text{curl}}(\hat{A}_n-h_{ex}\hat{a}_n)(\hat{x}) \\
=&\frac{\partial}{\partial x_1}\left( (A^2-h_{ex}a^2)(\hat{x},x_3)-(A_n^2-h_{ex}a_n^2)(\hat{x})\right) \\
-&\frac{\partial}{\partial x_2}\left( (A^1-h_{ex}a^1)(\hat{x},x_3)-(A_n^1-h_{ex}a_n^1)(\hat{x})\right)
\end{split}
\end{equation*}
and it follows from this, \eqref{A} and \eqref{An} and the regularity results described in Section 2 that
\begin{equation}\label{4.1}
\begin{split}
&\hat{\text{curl}}\hat{A}(\hat x,x_3)-\hat{\text{curl}}\hat{A}_n(\hat x)\\
=&\left\lbrace \sum\limits_{\substack{k=0\\k\ne n}}^{N}\frac{\partial}{\partial x_1}(S_k(g_k^2)\left( \hat x,x_3)-S_k(g_k^2)(\hat x,ns)\right) +\frac{\partial}{\partial x_1}\left( S_n(g_n^2)(\hat x,x_3)-t_n^2(\hat x,ns)\right) \right\rbrace \\
-&\left\lbrace \sum\limits_{\substack{k=0\\k\ne n}}^{N}\frac{\partial}{\partial x_2}\left( S_k(g_k^1)(\hat x,x_3)-S_k(g_k^1)(\hat x,ns)\right) +\frac{\partial}{\partial x_2}\left( S_n(g_n^1)(\hat x,x_3)-t_n^1(\hat x,ns)\right) \right\rbrace
\end{split}
\end{equation}
in $L^2_{loc}(\mathbb{R}^3)$, where $h_k^i(\hat{x})=s(\partial_{i}u_{k}-\imath A_k^iu_k,-\imath u_k)\chi_{\Omega}(\hat{x})$ and $g_k^i(x)=\chi_{\Omega\times\{ks\}}(x)h_k^i(\hat x)$ for $i=1,2$.
\begin{lemma}
Assume that $h_{ex}>0$ and $\epsilon>0$ \textcolor{black} {(with no assumption on their relative values).}  Let $(\{u_n\}_{n=0}^N,\vec A)\in[H^1(\Omega;\mathbb{C})]^{N+1}\times K$ be a minimizer of $\mathcal{G}_{LD}^{\epsilon,s}$. We have
\begin{equation}\label{4.2}
\frac{1}{2}\sum\limits_{n=0}^{N-1}\int_{ns}^{(n+1)s}\int_{\Omega}|\hat{\mathrm{curl}}\hat{A}(\hat x,x_3)-\hat{\mathrm{curl}}\hat{A}_n(\hat x)|^2d\hat{x}dx_3\leq E_1+E_2,
\end{equation}
where
\begin{equation*}
\begin{split}
E_1=&\sum\limits_{n=0}^{N-1}\int_{ns}^{(n+1)s}\int_{\Omega} \bigl| \sum\limits_{\substack{k=0\\k\ne n}}^{N}\frac{\partial}{\partial x_1}\left(S_k(g_k^2)(\hat x,x_3)-S_k(g_k^2)(\hat x,ns)\right)\\
&\ \ \ \ \ \ \ \ \ \ \ \ \ \ \ \ \ \ \ \ +\frac{\partial}{\partial x_1}\left(S_n(g_n^2)(\hat x,x_3)-t_n^2(\hat x,ns)\right)\bigr|^2 d\hat{x}dx_3
\end{split}
\end{equation*}
and
\begin{equation*}
\begin{split}
E_2=&\sum\limits_{n=0}^{N-1}\int_{ns}^{(n+1)s}\int_{\Omega}\bigl|\sum\limits_{\substack{k=0\\k\ne n}}^{N}\frac{\partial}{\partial x_2}\left( S_k(g_k^1)(\hat x,x_3)-S_k(g_k^1)(\hat x,ns)\right) \\
&\ \ \ \ \ \ \ \ \ \ \ \ \ \ \ \ \ \ \ \ +\frac{\partial}{\partial x_2}\left( S_n(g_n^1)(\hat x,x_3)-t_n^1(\hat x,ns)\right) \bigr|^2d\hat{x}dx_3.
\end{split}
\end{equation*}
\end{lemma}
\begin{proof}
Since $|u_k|\leq 1$ a.e. in $\Omega$ (see \cite{BK}), we have
$$|h_k^i|\leq s|\partial_{i}u_{k}-\imath A_k^iu_k|$$
and
$$\lVert g_k^i\rVert_{L^2(\Omega\times\{ks\})}^2=\lVert h_k^i\rVert_{L^2(\Omega)}^2\leq s^2\lVert\partial_{i}u_{k}-\imath A_k^iu_k\rVert_{L^2(\Omega)}^2.$$
Applying the elementary inequality $(a-b)^2\leq 2a^2+2b^2$ to the representation formula for $\hat{\text{curl}}\hat{A}(\hat x,x_3)-\hat{\text{curl}}\hat{A}_n(\hat x)$ in \eqref{4.1} and taking the sum of the integrals, we obtain \eqref{4.2}.
\end{proof}

\begin{lemma}
Under the assumptions of Lemma 4.1, \textcolor{black}{there is a constant $C$ depending only on $R_0$ (\textcolor{black}{defined in (\ref{r0})}) and $L$ such that for all $s$ sufficiently small (independent of $\epsilon$),} we have
\begin{equation*}
E_1\leq(N+1)\sum\limits_{k=0}^{N}Cs^{\frac{2}{7}}\lVert h^2_k\rVert^2_{L^2(\Omega)}
\end{equation*}
and
\begin{equation*}
E_2\leq(N+1)\sum\limits_{k=0}^{N}Cs^{\frac{2}{7}}\lVert h^1_k\rVert^2_{L^2(\Omega)}.
\end{equation*}
\end{lemma}
\begin{proof}
In the following we analyze $E_1$ and the analysis for $E_2$ will be similar. First define
\begin{equation*}
\Delta_{n,k}(\hat x,x_3)=\frac{\partial}{\partial x_1}[S_k(g_k^2)(\hat x,x_3)-S_k(g_k^2)(\hat x,ns)]
\end{equation*}
for $n\ne k$ and
\begin{equation*}
\Delta_{n,n}(\hat x,x_3)=\frac{\partial}{\partial x_1}[S_n(g_n^2)(\hat x,x_3)-t_n^2(\hat x,ns)].
\end{equation*}
Note that for $n\ne k$, $\Delta_{n,k}$ is $C^{\infty}$ in $\mathbb{R}^3\setminus\overline{\Omega}_k$ since it is harmonic there. By the Cauchy-Schwartz inequality,
\begin{equation}\label{4.1.1}
\begin{split}
E_1\leq (N+1)\sum\limits_{n=0}^{N-1}\int_{ns}^{(n+1)s}\int_{\Omega}\sum\limits_{k=0}^{N}|\Delta_{n,k}(\hat x,x_3)|^2d\hat{x}dx_3\\
=(N+1)\sum\limits_{k=0}^{N}\sum\limits_{n=0}^{N-1}\int_{ns}^{(n+1)s}\int_{\Omega}|\Delta_{n,k}(\hat x,x_3)|^2d\hat{x}dx_3.
\end{split}
\end{equation}
Let $\frac{1}{2}<\alpha<1$ be a constant to be chosen later. For every $k$ fixed in $\{0,1,\dddot\ ,N\}$, we write
\begin{equation}\label{4.1.2}
\sum\limits_{n=0}^{N-1}\int_{ns}^{(n+1)s}\int_{\Omega}|\Delta_{n,k}(\hat x,x_3)|^2d\hat{x}dx_3=E_{1,k}+\tilde E_{1,k},
\end{equation}
where
\begin{equation*}
E_{1,k}=\sum\limits_{|n-k|\leq s^{-\alpha}}\int_{ns}^{(n+1)s}\int_{\Omega}|\Delta_{n,k}(\hat x,x_3)|^2d\hat{x}dx_3
\end{equation*}
and
\begin{equation*}
\tilde E_{1,k}=\sum\limits_{|n-k|>s^{-\alpha}}\int_{ns}^{(n+1)s}\int_{\Omega}|\Delta_{n,k}(\hat x,x_3)|^2d\hat{x}dx_3.
\end{equation*}
The sums above are taken over $n$ in the indicated subsets of $\{0,1,\dddot\ ,N-1\}$. If $|n-k|\leq s^{-\alpha}$, we have $|ns-ks|\leq s^{1-\alpha}\rightarrow 0$ as $s\rightarrow 0$. Therefore, for $s$ sufficiently small and for each $n\ne k$ in $\{0,1,\dddot\ ,N-1\}$ satisfying $|n-k|\leq s^{-\alpha}$, the following holds for any $(\hat{x},x_3)\in\Omega\times[ns,(n+1)s)$:
\begin{equation*}
\begin{split}
|\Delta_{n,k}(\hat x,x_3)|&\leq|\frac{\partial}{\partial x_1}[S_k(g_k^2)(\hat{x},x_3)]|+|\frac{\partial}{\partial x_1}[S_k(g_k^2)(\hat{x},ns)]|\\
&\leq 2[\frac{\partial}{\partial x_1}S_k(g_k^2)]^*(\hat{x},ks)
\end{split}
\end{equation*}
and thus
\begin{equation*}
\int_{ns}^{(n+1)s}\int_{\Omega}|\Delta_{n,k}(\hat x,x_3)|^2d\hat{x}dx_3\leq 4s\lVert[\frac{\partial}{\partial x_1}S_k(g_k^2)]^*(\hat x,ks)\rVert_{L^2(\Omega\times\{ks\})}^2,
\end{equation*}
where from Section 2, $[\frac{\partial}{\partial x_1}S_k(g_k^2)]^*(\hat{x},ks)$ is the nontangential maximal function of the tangential derivative $\frac{\partial}{\partial x_1}S_k(g_k^2)$ at the point $(\hat{x},ks)$ on the $k$th layer $\Omega_k$. By \eqref{nontangential_maximal} we have
\begin{equation*}
\lVert[\frac{\partial}{\partial x_1}S_k(g_k^2)]^*\rVert_{L^2(\Omega\times\{ks\})}^2\leq C\lVert g_k^2\rVert_{L^2(\Omega\times\{ks\})}^2\leq Cs^2\lVert\partial_{2}u_{k}-\imath A_k^2u_k\rVert_{L^2(\Omega)}^2,
\end{equation*}
where $C$ is a constant depending only on $R_0$. For $n=k$, we know from \eqref{grad_tn} that
$$\lVert\hat{\nabla}t_k^2\rVert_{L^2(\Omega)}\leq C\lVert g_k^2\rVert_{L^2(\Omega\times\{ks\})}.$$
Hence
\begin{equation}\label{TraceEstimateI1_1}
\begin{split}
E_{1,k}\leq&\sum\limits_{|n-k|\leq s^{-\alpha}}4sC\lVert g_k^2\rVert_{L^2(\Omega\times\{ks\})}^2\leq C\cdot s^{-\alpha}\cdot s\lVert g_k^2\rVert_{L^2(\Omega\times\{ks\})}^2\\
=&C\cdot s^{1-\alpha}\lVert g_k^2\rVert_{L^2(\Omega\times\{ks\})}^2
\end{split}
\end{equation}
for all $s$ sufficiently small and some constant $C$ depending only on $R_0$.

In order to estimate $\tilde E_{1,k}$, consider $n$ in $\{0,1,\dddot\ ,N-1\}$ such that $|n-k|>s^{-\alpha}$. Recall that $S_k(g_k^i)$ is harmonic in $\mathbb{R}^3\setminus\overline{\Omega}_k$. Without loss of generality, we may assume $k+s^{-\alpha}<n\leq N-1$. (The analysis for $0\leq n< k-s^{-\alpha}$ is similar.) Let $D_U=\{(\hat x,x_3)\in D:x_3\geq ks+s^{1-\alpha}\}$. Then it is clear that $\Omega\times[ns,(n+1)s]\subset D_U$ for every $n$ satisfying the above assumptions. Take some bounded smooth domain $D_k\subset\{(\hat x,x_3)\in\mathbb{R}^3:x_3> ks+\frac{s^{1-\alpha}}{2}\}$ in $\mathbb{R}^3$ such that $\overline{\Omega}\times\{ks+\frac{s^{1-\alpha}}{2}\}$ is a flat portion of the boundary of $D_k$, $\overline{D_U}\subset D_k$ and $dist(\overline{D_U},\partial D_k)\geq\frac{s^{1-\alpha}}{2}$. Then $S_k(g_k^i)$ is harmonic in $D_k$ and for each $x\in D_k$, it follows from \eqref{Sk} and H\"{o}lder's inequality that
\begin{equation*}
\begin{split}
|S_k(g_k^i)(x)|=&|\int_{\Omega}\frac{c}{|x-(\hat{y},ks)|}h_k^i(\hat{y})d\hat{y}|\\
\leq&\frac{1}{4\pi}\int_{\Omega}\frac{1}{|x_3-ks|}\cdot|h_k^i(\hat{y})|d\hat{y}\\
\leq&\frac{1}{4\pi}\int_{\Omega}\frac{2}{s^{1-\alpha}}\cdot|h_k^i(\hat{y})|d\hat{y}\leq\frac{C_0}{s^{1-\alpha}}|\Omega|^{\frac{1}{2}}\lVert h_k^i\rVert_{L^2(\Omega)},
\end{split}
\end{equation*}
and therefore $\sup\limits_{D_k}|S_k(g_k^i)|\leq\frac{C_0}{s^{1-\alpha}}|\Omega|^{\frac{1}{2}}\lVert h_k^i\rVert_{L^2(\Omega)}$. For $(\hat x,x_3)\in\overline{\Omega}\times[ns,(n+1)s]$, we have (since $(\hat x,x_3)\in\overline{\Omega}\times[ns,(n+1)s]\subset\subset\mathbb{R}^3\setminus\overline{\Omega}_k$ and $\Delta_{n,k}$ is harmonic in $\mathbb{R}^3\setminus\overline{\Omega}_k$)
\begin{equation*}
\begin{split}
|\Delta_{n,k}(\hat x,x_3)|\leq&\sup_{\overline{\Omega}\times[ns,(n+1)s]}|\frac{\partial^2}{\partial x_1\partial x_3}S_k(g_k^2)|\cdot|x_3-ns|\\
\leq&s\cdot\sup_{D_U}|\frac{\partial^2}{\partial x_1\partial x_3}S_k(g_k^2)|.
\end{split}
\end{equation*}
By Theorem 2.10 in \cite{Gilbarg} and the fact that $dist(\overline{D_U},\partial D_k)\geq\frac{s^{1-\alpha}}{2}$, we have
\begin{equation*}
\sup_{D_U}|\frac{\partial^2}{\partial x_1\partial x_3}S_k(g_k^2)|\leq(\frac{12}{s^{1-\alpha}})^2\sup_{D_k}|S_k(g_k^2)|\leq\frac{C_0}{s^{3-3\alpha}}|\Omega|^{\frac{1}{2}}\lVert h_k^2\rVert_{L^2(\Omega)}.
\end{equation*}
Hence we obtain
\begin{equation*}
|\Delta_{n,k}(\hat x,x_3)|\leq C_0s^{3\alpha-2}|\Omega|^{\frac{1}{2}}\lVert h_k^2\rVert_{L^2(\Omega)}
\end{equation*}
and therefore
\begin{equation*}
\int_{\Omega}|\Delta_{n,k}(\hat x,x_3)|^2d\hat{x}\leq C_0 s^{6\alpha-4}|\Omega|\cdot\lVert h_k^2\rVert_{L^2(\Omega)}^2\cdot|\Omega|=C_0 |\Omega|^2s^{6\alpha-4}\lVert h_k^2\rVert_{L^2(\Omega)}^2.
\end{equation*}
To get the best rate of convergence in $s$ as $s\rightarrow 0$, we may take $\alpha=\frac{5}{7}$ so that $1-\alpha=6\alpha-4=\frac{2}{7}$. The above estimate then becomes
\begin{equation*}
\int_{\Omega}|\Delta_{n,k}(\hat x,x_3)|^2d\hat{x}\leq C_0|\Omega|^2s^{\frac{2}{7}}\lVert h^2_k\rVert^2_{L^2(\Omega)}.
\end{equation*}

Integrating over $[ns,(n+1)s]$ and observing that the cardinality of the indices for the summation on $n$ in $\tilde E_{1,k}$ is less than $N$ and $Ns=L$ is fixed, we obtain
\begin{equation}\label{TraceEstimateI1_2}
\tilde E_{1,k}\leq L\cdot C_0|\Omega|^2s^{\frac{2}{7}}\lVert h^2_k\rVert^2_{L^2(\Omega)}\leq Cs^{\frac{2}{7}}\lVert h^2_k\rVert^2_{L^2(\Omega)}
\end{equation}
for some constant $C$ depending only on $R_0$ and $L$. Combining \eqref{4.1.1}, \eqref{4.1.2}, \eqref{TraceEstimateI1_1} and \eqref{TraceEstimateI1_2} yields
\begin{equation*}
E_1\leq(N+1)\sum\limits_{k=0}^{N}Cs^{\frac{2}{7}}\lVert h^2_k\rVert^2_{L^2(\Omega)}.
\end{equation*}
Similarly we have
\begin{equation*}
E_2\leq(N+1)\sum\limits_{k=0}^{N}Cs^{\frac{2}{7}}\lVert h^1_k\rVert^2_{L^2(\Omega)}.
\end{equation*}
\end{proof}

\begin{theorem}\label{TraceEstimate}
Under the assumptions of Lemma 4.1, we have
\begin{equation*}
\frac{1}{2}\sum\limits_{n=0}^{N-1}\int_{ns}^{(n+1)s}\int_{\Omega}|\hat{\textnormal{curl}}\hat{A}(\hat x,x_3)-\hat{\textnormal{curl}}\hat{A}_n(\hat x)|^2d\hat{x}dx_3\leq Cs^{\frac{2}{7}}\mathcal{G}_{LD}^{\epsilon,s}(\{u_n\}_{n=0}^N,\vec{A})
\end{equation*}
\textcolor{black}{for all $s$ sufficiently small (independent of $\epsilon$) and some constant $C$ depending only on $R_0$ and $L$.}
\end{theorem}
\begin{proof}
First note that
\begin{equation*}
\begin{split}
\sum\limits_{k=0}^{N}(&\lVert h^1_k\rVert^2_{L^2(\Omega)}+\lVert h^2_k\rVert^2_{L^2(\Omega)})\\
&\leq s^2\sum\limits_{k=0}^{N}(\lVert \partial_{1}u_{k}-\imath A_k^1u_k\rVert^2_{L^2(\Omega)}+\lVert \partial_{2}u_{k}-\imath A_k^2u_k\rVert^2_{L^2(\Omega)})\\
&=s^2\sum\limits_{k=0}^{N}\lVert\hat{\nabla}_{\hat{A}_k}u_k\rVert_{L^2(\Omega)}^2.
\end{split}
\end{equation*}
Since $\frac{s}{2}\sum\limits_{k=0}^{N}\lVert\hat{\nabla}_{\hat{A}_k}u_k\rVert_{L^2(\Omega)}^2$ is part of the Lawrence-Doniach energy, it is clear that
\begin{equation}\label{4.2.1}
\sum\limits_{k=0}^{N}(\lVert h^1_k\rVert^2_{L^2(\Omega)}+\lVert h^2_k\rVert^2_{L^2(\Omega)})\leq 2s\mathcal{G}_{LD}^{\epsilon,s}(\{u_n\}_{n=0}^N,\vec{A}).
\end{equation}
Hence, it follows from Lemmas 4.1, 4.2 and \eqref{4.2.1} that
\begin{equation*}
\begin{split}
\frac{1}{2}\sum\limits_{n=0}^{N-1}\int_{ns}^{(n+1)s}&\int_{\Omega}|\hat{\text{curl}}\hat{A}(\hat x,x_3)-\hat{\text{curl}}\hat{A}_n(\hat x)|^2d\hat{x}dx_3\leq E_1+E_2\\
\leq&(N+1)Cs^{\frac{2}{7}}\sum\limits_{k=0}^{N}(\lVert h^1_k\rVert^2_{L^2(\Omega)}+\lVert h^2_k\rVert^2_{L^2(\Omega)})\\
\leq&(N+1)Cs^{\frac{2}{7}}\cdot 2s\mathcal{G}_{LD}^{\epsilon,s}(\{u_n\}_{n=0}^N,\vec{A})\leq Cs^{\frac{2}{7}}\mathcal{G}_{LD}^{\epsilon,s}(\{u_n\}_{n=0}^N,\vec{A})
\end{split}
\end{equation*}
\textcolor{black}{for all $\epsilon$ sufficiently small} and some constant $C$ depending only on $R_0$ and $L$.
\end{proof}

\begin{remark}
\textcolor{black}{By a trivial modification of the proofs of Lemmas 4.1 and 4.2, it is clear that under the assumptions of Lemma 4.1, we have that for} $i \in \{1,2\}$, $j \in \{1,2\}$, and $C$ as above,
\begin{equation}\label{lem4.1.2}
\begin{split}
\sum\limits_{n=0}^{N-1}\int_{ns}^{(n+1)s}\int_{\Omega}\big|\frac{\partial}{\partial x_j}\left(A^i(\hat x,x_3)-A_n^i(\hat x)\right)\big|^2&d\hat{x}dx_3\\
\leq&(N+1)\sum\limits_{k=0}^{N}Cs^{\frac{2}{7}}\lVert h^i_k\rVert^2_{L^2(\Omega)}
\end{split}
\end{equation}
\textcolor{black}{for all $s$ sufficiently small.}  Therefore, summing in \eqref{lem4.1.2} over all $i$ and $j$ in $\{1,2\}$ and using \eqref{4.2.1}, we obtain
\begin{equation*}
\sum\limits_{n=0}^{N-1}\int_{ns}^{(n+1)s}\int_{\Omega}\bigl|\hat{\nabla}\bigr(\hat{A}(\hat x,x_3)-\hat{A}_n(\hat x)\bigr)\bigr|^2d\hat{x}dx_3\leq Cs^{\frac{2}{7}}\mathcal{G}_{LD}^{\epsilon,s}(\{u_n\}_{n=0}^N,\vec{A})
\end{equation*}
for $C$ as in Theorem 4.3.
\end{remark}

\section{Lower bound}

Theorem 4.3 provides the main step
in our proof of the lower bound on the minimal Lawrence-Doniach energy. This relies on approximating the energy of the magnetic term $|\hat{\text{curl}}\hat{A}-h_{ex}|^2$ by the sum of its traces $|\hat{\text{curl}}\hat{A}_n-h_{ex}|^2$ on the layers \textcolor{black}{$\Omega\times\{ns\}$} in the thin domains $\Omega\times[ns,(n+1)s)$. Theorem 4.3 indicates that the error from this approximation is indeed of a lower order compared to the leading order term of the total energy.

\begin{theorem}
Assume $|\ln\epsilon|\ll h_{ex}\ll\frac{1}{\epsilon^2}$ as $\epsilon\rightarrow 0$. Let $(\{u_n\}_{n=0}^N,\vec{A})\in[H^1(\Omega;\mathbb C)]^{N+1} \times K$ be a minimizer of $\mathcal{G}_{LD}^{\epsilon,s}$. Then we have
\begin{equation*}
\mathcal{G}_{LD}^{\epsilon,s}(\{u_n\}_{n=0}^N,\vec{A})\geq \frac{|D|}{2}h_{ex} (\ln\frac{1}{\epsilon\sqrt{h_{ex}}})\bigl(1-o_{\epsilon}(1)-C s^{\frac{1}{7}}\bigr)
\end{equation*}
for all $\epsilon$ and $s$ sufficiently small, where $C$ is a constant depending only on $R_0$ and $L$.
\end{theorem}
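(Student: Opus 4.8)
The plan is to bound $\mathcal{G}_{LD}^{\epsilon,s}$ below by a sum over the layers of two-dimensional Ginzburg--Landau energies $F_\epsilon(u_n,\hat A_n)$, and then to invoke the known two-dimensional lower bound from \cite{SS}. First I would discard the (nonnegative) Josephson coupling term, restrict the magnetic term to $D$, and retain only its third component, using that $(\nabla\times\vec A)\cdot\vec e_3=\hat{\text{curl}}\hat A$:
\[
\mathcal{G}_{LD}^{\epsilon,s}(\{u_n\},\vec A)\geq s\sum_{n=0}^{N}\int_\Omega\Big[\tfrac12|\hat\nabla_{\hat A_n}u_n|^2+\tfrac{1}{4\epsilon^2}(1-|u_n|^2)^2\Big]d\hat x+\tfrac12\int_D|\hat{\text{curl}}\hat A(\hat x,x_3)-h_{ex}|^2\,dx.
\]
The crux is to replace $\hat{\text{curl}}\hat A(\hat x,x_3)$ by the layer trace $\hat{\text{curl}}\hat A_n(\hat x)$ on each slab $\Omega\times[ns,(n+1)s)$, and Lemma \ref{TraceEstimate} is exactly what controls the error of this replacement.

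Next I would carry out the replacement quantitatively. Writing $a=\hat{\text{curl}}\hat A(\hat x,x_3)-h_{ex}$, $b_n=\hat{\text{curl}}\hat A_n(\hat x)-h_{ex}$, and $e=a-b_n$, Young's inequality gives, for any $0<\delta<1$,
\[
a^2\geq(1-\delta)b_n^2-\tfrac1\delta e^2.
\]
Integrating over each slab (where $b_n$ is independent of $x_3$, so $\int_{ns}^{(n+1)s}\int_\Omega b_n^2=s\int_\Omega b_n^2$) and summing over $n=0,\dots,N-1$, the $e^2$-contribution is precisely the left-hand side of Lemma \ref{TraceEstimate}, hence bounded by $\tfrac1\delta Cs^{2/7}M_\epsilon$. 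Grouping the kinetic and potential terms with the magnetic terms layer by layer, and using their nonnegativity to also insert the factor $(1-\delta)$, reproduces $F_\epsilon(u_n,\hat A_n)$ (note that $\tfrac{1}{4\epsilon^2}$ in $\mathcal{G}_{LD}^{\epsilon,s}$ matches $\tfrac12\cdot\tfrac{1}{2\epsilon^2}$ in $F_\epsilon$), so that, after discarding the nonnegative $n=N$ term,
\[
\mathcal{G}_{LD}^{\epsilon,s}(\{u_n\},\vec A)\geq(1-\delta)\,s\sum_{n=0}^{N-1}F_\epsilon(u_n,\hat A_n)-\tfrac1\delta Cs^{2/7}M_\epsilon.
\]

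Then I would invoke the two-dimensional lower bound. Since a minimizer in the Coulomb gauge satisfies $u_n\in H^1(\Omega;\mathbb C)$ and $\hat A_n\in H^1_{loc}(\mathbb R^2)$, the restriction $(u_n,\hat A_n|_\Omega)$ lies in $X_\Omega$, so $F_\epsilon(u_n,\hat A_n)\geq\min_{X_\Omega}F_\epsilon$, which by Proposition 2.1 equals $\min_X GL_\epsilon$. The Sandier--Serfaty lower bound in this regime then gives $\min_{X_\Omega}F_\epsilon\geq\tfrac{|\Omega|}{2}h_{ex}\ln\tfrac{1}{\epsilon\sqrt{h_{ex}}}(1-o_\epsilon(1))$. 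Using $sN=L$ and $|D|=L|\Omega|$, this yields $s\sum_{n=0}^{N-1}F_\epsilon(u_n,\hat A_n)\geq sN\min_{X_\Omega}F_\epsilon\geq M_\epsilon(1-o_\epsilon(1))$.

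Finally, I would balance the two error terms by choosing the split parameter $\delta=s^{1/7}$, which makes both $\delta$ and $\tfrac1\delta s^{2/7}$ equal to $s^{1/7}$, giving
\[
\mathcal{G}_{LD}^{\epsilon,s}(\{u_n\},\vec A)\geq M_\epsilon\big[(1-s^{1/7})(1-o_\epsilon(1))-Cs^{1/7}\big]\geq M_\epsilon\big(1-o_\epsilon(1)-Cs^{1/7}\big),
\]
which is the claim after absorbing constants. Since the genuinely hard analytic work (the single-layer potential estimates) is already packaged in Lemma \ref{TraceEstimate}, the main remaining obstacle is merely the correct bookkeeping of the Young split and the optimization of $\delta$; I would also check carefully that the extra $n=N$ layer and the mismatch between the $N+1$ layers and the $N$ slabs are harmlessly absorbed into nonnegative terms.
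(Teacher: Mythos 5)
Your proposal is correct and follows essentially the same route as the paper's proof of Theorem 4.2: drop the Josephson and transverse magnetic terms, decompose $\int_D(\hat{\text{curl}}\hat{A}-h_{ex})^2$ slab by slab, control the replacement of $\hat{\text{curl}}\hat{A}$ by its trace $\hat{\text{curl}}\hat{A}_n$ via Lemma \ref{TraceEstimate}, and apply the Sandier--Serfaty lower bound for $F_\epsilon$ on each layer. The only (harmless, and arguably slightly cleaner) deviation is that you handle the cross term with a weighted Young split $a^2\geq(1-\delta)b_n^2-\delta^{-1}e^2$ optimized at $\delta=s^{1/7}$, whereas the paper uses $(a+b)^2\geq a^2-2|a||b|$ followed by Cauchy--Schwarz, which forces it to first establish the a priori bound $\sum_n\int|\hat{\text{curl}}\hat{A}_n-h_{ex}|^2\leq C_0M_\epsilon$ (its equation (4.9)) --- a step your argument does not need; both yield the same $Cs^{1/7}M_\epsilon$ error.
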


\begin{proof}
By dropping the nonnegative Josephson coupling term and the square of the $L^2$ norm of the first two components of $\nabla\times\vec A-h_{ex}\vec e_3$, it is clear that
\begin{equation*}
\begin{split}
\mathcal{G}_{LD}^{\epsilon,s}(\{u_n\}_{n=0}^N,\vec{A})&\geq s\sum^N_{n=0} \int_\Omega\left[ \frac{1}{2}|\hat{\nabla}_{\hat{A}_{n}}u_n|^2+\frac{1}{4\epsilon^2}(1-|u_n|^2)^2\right] d\hat{x}\\
&+\frac{1}{2}\int_{\mathbb{R}^3}(\hat{\text{curl}}\hat{A}-h_{ex})^{2}dx.
\end{split}
\end{equation*}
Then
\begin{equation*}
\begin{split}
\frac{1}{2}\int_{\mathbb{R}^3}\bigl( &\hat{\text{curl}}\hat{A}(x)-h_{ex}\bigr) ^{2}dx\geq\frac{1}{2}\int_{D}\bigl(\hat{\text{curl}}\hat{A}(x)-h_{ex}\bigr)^{2}dx\\
=&\frac{1}{2}\sum\limits_{n=0}^{N-1}\int_{ns}^{(n+1)s}\int_{\Omega}\bigl(\hat{\text{curl}}\hat{A}(\hat x,x_3)-h_{ex}\bigr)^{2}d\hat{x}dx_3\\
=&\frac{1}{2}\sum\limits_{n=0}^{N-1}\int_{ns}^{(n+1)s}\int_{\Omega}\left[ \hat{\text{curl}}\bigl(\hat{A}(\hat x,x_3)-\hat{A}_n(\hat x)\bigr)+\bigl(\hat{\text{curl}}\hat{A}_n(\hat x)-h_{ex}\bigr)\right] ^{2}d\hat{x}dx_3.
\end{split}
\end{equation*}
Applying the elementary inequality $(a+b)^2\geq a^2-2|a|\cdot|b|$ yields
\begin{equation}\label{LB.2}
\begin{split}
\frac{1}{2}\int_{D}(\hat{\text{curl}}\hat{A}-h_{ex})^{2}&dx\geq\frac{1}{2}\sum\limits_{n=0}^{N-1}\int_{ns}^{(n+1)s}\int_{\Omega}(\hat{\text{curl}}\hat{A}_n-h_{ex})^{2}d\hat{x}dx_3\\
&-\sum\limits_{n=0}^{N-1}\int_{ns}^{(n+1)s}\int_{\Omega}|\hat{\text{curl}}(\hat{A}-\hat{A}_n)|\cdot|\hat{\text{curl}}\hat{A}_n-h_{ex}|d\hat{x}dx_3.
\end{split}
\end{equation}
Therefore
\begin{equation}\label{LB.3}
\begin{split}
&\mathcal{G}_{LD}^{\epsilon,s}(\{u_n\}_{n=0}^N,\vec{A})\\
\geq& s\sum^{N-1}_{n=0} \int_\Omega\left[ \frac{1}{2}|\hat{\nabla}_{\hat{A}_{n}}u_n|^2+\frac{1}{4\epsilon^2}(1-|u_n|^2)^2\right] d\hat{x}+\frac{1}{2}\int_{D}(\hat{\text{curl}}\hat{A}-h_{ex})^{2}dx\\
\geq& s\sum^{N-1}_{n=0}\left\lbrace \int_\Omega\left[ \frac{1}{2}|\hat{\nabla}_{\hat{A}_{n}}u_n|^2+\frac{1}{4\epsilon^2}(1-|u_n|^2)^2\right] d\hat{x}+\frac{1}{2}\int_{\Omega}(\hat{\text{curl}}\hat{A}_n-h_{ex})^{2}d\hat{x}\right\rbrace \\
-&\sum\limits_{n=0}^{N-1}\int_{ns}^{(n+1)s}\int_{\Omega}|\hat{\text{curl}}(\hat{A}-\hat{A}_n)|\cdot|\hat{\text{curl}}\hat{A}_n-h_{ex}|d\hat{x}dx_3.
\end{split}
\end{equation}
It was proved in \cite{BK} that $u_n\in H^1(\Omega;\mathbb{C})$ and $\hat{A}_n\in H^1_{loc}(\mathbb{R}^2;\mathbb{R}^2)$ for all $n=0,1,\dddot\ ,N-1$. Therefore each pair $(u_n,\hat{A}_n)$ is in the admissible set for the minimization of the two-dimensional Ginzburg-Landau energy $F_{\epsilon}$, and by Theorem 8.1 proved by Sandier and Serfaty in \cite{SS} we have, for each $n=0,1,\dddot\ ,N-1$,
\begin{equation}\label{2DLB}
\begin{split}
\int_\Omega\bigl[\frac{1}{2}|\hat{\nabla}_{\hat{A}_{n}}u_n|^2+&\frac{1}{4\epsilon^2}(1-|u_n|^2)^2 \bigr]d\hat{x}+\frac{1}{2}\int_{\Omega}(\hat{\text{curl}}\hat{A}_n-h_{ex})^{2}d\hat{x}\\
&\geq\frac{|\Omega|}{2}h_{ex}\ln\frac{1}{\epsilon\sqrt{h_{ex}}}(1-o_{\epsilon}(1))
\end{split}
\end{equation}
as $\epsilon\rightarrow 0$. \textcolor{black}{(The proof in \cite{SS} is for smooth domains, but the estimate
\eqref{2DLB} follows by approximating the Lipschitz domain $\Omega$ with an appropriate sequence of smooth domains.)}  Using the Cauchy-Schwartz inequality and H\"{o}lder's inequality, we obtain
\begin{equation}\label{LB.4}
\begin{split}
&\sum\limits_{n=0}^{N-1}\int_{ns}^{(n+1)s}\int_{\Omega}|\hat{\text{curl}}(\hat{A}-\hat{A}_n)|\cdot|\hat{\text{curl}}\hat{A}_n-h_{ex}|d\hat{x}dx_3\\
\leq
&\left( \sum\limits_{n=0}^{N-1}\int_{ns}^{(n+1)s}\int_{\Omega}|\hat{\text{curl}}(\hat{A}-\hat{A}_n)|^2d\hat{x}dx_3\right) ^{\frac{1}{2}}\\
&\ \ \ \ \ \ \ \ \ \ \ \ \ \ \ \ \ \ \ {\cdot} \left( \sum\limits_{n=0}^{N-1}\int_{ns}^{(n+1)s}\int_{\Omega}|\hat{\text{curl}}\hat{A}_n-h_{ex}|^2d\hat{x}dx_3\right)
^{\frac{1}{2}}.
\end{split}
\end{equation}
Since $\hat{\text{curl}}\hat{A}_n-h_{ex}=(\hat{\text{curl}}(\hat{A}_n-\hat{A}))+(\hat{\text{curl}}\hat{A}-h_{ex})$, using the elementary inequality $(a+b)^2\leq 2a^2+2b^2$ yields
\begin{equation*}
\begin{split}
&\sum\limits_{n=0}^{N-1}\int_{ns}^{(n+1)s}\int_{\Omega}|\hat{\text{curl}}\hat{A}_n-h_{ex}|^2d\hat{x}dx_3\\
\leq&2\sum\limits_{n=0}^{N-1}\int_{ns}^{(n+1)s}\int_{\Omega}|\hat{\text{curl}}(\hat{A}_n-\hat{A})|^2d\hat{x}dx_3+2\sum\limits_{n=0}^{N-1}\int_{ns}^{(n+1)s}\int_{\Omega}|\hat{\text{curl}}\hat{A}-h_{ex}|^2d\hat{x}dx_3.
\end{split}
\end{equation*}
As a result of this, Theorem \ref{TraceEstimate} and Theorem \ref{UB}, we have
\begin{equation}\label{curlAn}
\sum\limits_{n=0}^{N-1}\int_{ns}^{(n+1)s}\int_{\Omega}|\hat{\text{curl}}\hat{A}_n-h_{ex}|^2d\hat{x}dx_3\leq o_{\epsilon,s}(1)M_{\epsilon}+4\cdot M_{\epsilon}(1+o_{\epsilon,s}(1))\leq C_0\cdot M_{\epsilon}
\end{equation}
\textcolor{black}{for all $\epsilon$ and $s$ sufficiently small.}  By \eqref{LB.4}, Theorem 4.3 and \eqref{curlAn}, we obtain
\begin{equation}\label{LB_CrossProduct}
\begin{split}
\sum\limits_{n=0}^{N-1}\int_{ns}^{(n+1)s}&\int_{\Omega}|\hat{\text{curl}}(\hat{A}-\hat{A}_n)|\cdot|\hat{\text{curl}}\hat{A}_n-h_{ex}|d\hat{x}dx_3\\
\leq&(Cs^{\frac{2}{7}}M_{\epsilon})^{\frac{1}{2}}\cdot(C_0M_{\epsilon})^{\frac{1}{2}}\leq Cs^{\frac{1}{7}}M_{\epsilon}
\end{split}
\end{equation}
for some \textcolor{black}{constants} $C$ depending only on $R_0$ and $L$ for all $\epsilon$ and $s$ sufficiently small. By \eqref{LB.2}, \eqref{2DLB}, \eqref{LB_CrossProduct} and the definition of $M_{\epsilon}$, i.e., $M_{\epsilon}=\frac{|D|}{2}h_{ex}\ln\frac{1}{\epsilon\sqrt{h_{ex}}}$, we conclude that
\begin{equation*}
\begin{split}
\mathcal{G}_{LD}^{\epsilon,s}(\{u_n\}_{n=0}^N,\vec{A})&\geq sN\cdot\frac{|\Omega|}{2}h_{ex}\ln\frac{1}{\epsilon\sqrt{h_{ex}}}(1-o_{\epsilon}(1))-Cs^{\frac{1}{7}}\frac{|D|}{2}h_{ex}\ln\frac{1}{\epsilon\sqrt{h_{ex}}}\\
&=\frac{|D|}{2}h_{ex}\ln\frac{1}{\epsilon\sqrt{h_{ex}}}(1-o_{\epsilon}(1))-Cs^{\frac{1}{7}}\frac{|D|}{2}h_{ex}\ln\frac{1}{\epsilon\sqrt{h_{ex}}}\\
&=\frac{|D|}{2}h_{ex}\ln\frac{1}{\epsilon\sqrt{h_{ex}}}(1-o_{\epsilon}(1)-Cs^{\frac{1}{7}})
\end{split}
\end{equation*}
\textcolor{black}{for all $\epsilon$ and $s$ sufficiently small and} some constant $C$ depending only on $R_0$ and $L$. This proves the theorem.
\end{proof}

By combining Theorems 3.1 and 5.1 we obtain Theorem 1. Next we prove Theorem 2:

\begin{proof}[Proof of Theorem 2]
By \eqref{LB.3}, \eqref{2DLB},
and \eqref{LB_CrossProduct}, we see that the leading term in our lower bound of the minimum Lawrence-Doniach energy comes from two terms, since
\begin{equation}\label{Thm2.1}
\begin{split}
\mathcal{G}_{LD}^{\epsilon,s}(\{u_n\}_{n=0}^N,\vec{A})\geq& s\sum^N_{n=0} \int_\Omega\left[ \frac{1}{2}|\hat{\nabla}_{\hat{A}_{n}}u_n|^2+\frac{1}{4\epsilon^2}(1-|u_n|^2)^2\right] d\hat{x}\\
+&\frac{1}{2}\int_{D}(\hat{\text{curl}}\hat{A}-h_{ex})^{2}dx\\
\geq&\frac{|D|}{2}h_{ex}\ln\frac{1}{\epsilon\sqrt{h_{ex}}}(1-o_{\epsilon}(1)-Cs^{\frac{1}{7}})
\end{split}
\end{equation}
for some constant $C$ depending only on $R_0$ and $L$ and for all $\epsilon$ and $s$ sufficiently small. As a result of \eqref{Thm2.1} and Theorem 3.1, we conclude that
\begin{equation*}
\begin{split}
&s\sum^{N-1}_{n=0}\int_\Omega\frac{1}{2\lambda^{2}s^2}|u_{n+1}-u_{n}e^{\imath \int_{ns}^{(n+1)s}A^{3}dx_{3}}|^{2}d\hat{x}+\frac{1}{2}\int_{\mathbb{R}^3\setminus D}|\nabla\times\vec{A}-h_{ex}\vec{e}_3|^{2}dx\\
&\qquad\qquad+\frac{1}{2}\int_{D}[(\frac{\partial A^3}{\partial x_2}-\frac{\partial A^2}{\partial x_3})^{2}+(\frac{\partial A^1}{\partial x_3}-\frac{\partial A^3}{\partial x_1})^{2}]dx\leq o_{\epsilon,s}(1)M_{\epsilon}.
\end{split}
\end{equation*}
This proves Theorem 2.
\end{proof}

\begin{corollary}
Under the assumptions of Theorem 2, we have
\begin{equation*}
\bigl| \mathcal{G}_{LD}^{\epsilon, s}(\{u_n\}_{n=0}^N, \vec{A})-\sum^{N-1}_{n=0}sF_\epsilon(u_n,\hat {A}_n)\bigr|  \leq o_{\epsilon,s}(1) M_{\epsilon}
\end{equation*}
\textcolor{black}{for all $\epsilon$ and $s$ sufficiently small.}
\end{corollary}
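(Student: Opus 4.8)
The plan is to expand the difference $\mathcal{G}_{LD}^{\epsilon,s}(\{u_n\}_{n=0}^N,\vec A)-\sum_{n=0}^{N-1}sF_\epsilon(u_n,\hat A_n)$ term by term and to show that every leftover piece is $o_{\epsilon,s}(1)M_\epsilon$. Writing out $F_\epsilon$, the sum $\sum_{n=0}^{N-1}sF_\epsilon(u_n,\hat A_n)$ reproduces exactly the first (gradient-plus-potential) term of \eqref{LD} for the layers $n=0,\dots,N-1$ together with $\sum_{n=0}^{N-1}\frac{s}{2}\int_\Omega(\hat{\text{curl}}\hat A_n-h_{ex})^2\,d\hat x$. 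Hence the difference consists of: (i) the single $n=N$ layer contribution $s\int_\Omega[\frac{1}{2}|\hat\nabla_{\hat A_N}u_N|^2+\frac{1}{4\epsilon^2}(1-|u_N|^2)^2]\,d\hat x$; (ii) the Josephson coupling term \eqref{JC}; (iii) the exterior magnetic energy $\frac{1}{2}\int_{\mathbb{R}^3\setminus D}|\nabla\times\vec A-h_{ex}\vec e_3|^2\,dx$; (iv) the transverse magnetic energy $\frac{1}{2}\int_D[(\partial_2 A^3-\partial_3 A^2)^2+(\partial_3 A^1-\partial_1 A^3)^2]\,dx$; and (v) the discrepancy $\frac{1}{2}\int_D(\hat{\text{curl}}\hat A-h_{ex})^2\,dx-\sum_{n=0}^{N-1}\frac{s}{2}\int_\Omega(\hat{\text{curl}}\hat A_n-h_{ex})^2\,d\hat x$ between the three-dimensional in-plane magnetic energy and its layerwise counterpart.

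Terms (ii), (iii), (iv) are nonnegative and, by Theorem 2, their sum is at most $o_{\epsilon,s}(1)M_\epsilon$. For (v), since $D=\Omega\times[0,L]$ is tiled by the slabs $\Omega\times[ns,(n+1)s)$ and $\hat{\text{curl}}\hat A_n$ is independent of $x_3$, I would write it as $\frac{1}{2}\sum_{n=0}^{N-1}\int_{ns}^{(n+1)s}\int_\Omega[(\hat{\text{curl}}\hat A-h_{ex})^2-(\hat{\text{curl}}\hat A_n-h_{ex})^2]\,d\hat x\,dx_3$, factor the integrand as $\hat{\text{curl}}(\hat A-\hat A_n)\cdot[(\hat{\text{curl}}\hat A-h_{ex})+(\hat{\text{curl}}\hat A_n-h_{ex})]$, and apply the Cauchy--Schwarz inequality. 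The factor $\sum_n\int\int|\hat{\text{curl}}(\hat A-\hat A_n)|^2$ is $o_{\epsilon,s}(1)M_\epsilon$ by Lemma \ref{TraceEstimate}, while $\int_D|\hat{\text{curl}}\hat A-h_{ex}|^2$ and $\sum_n\int\int|\hat{\text{curl}}\hat A_n-h_{ex}|^2$ are both $\le C_0 M_\epsilon$ by Theorem \ref{UB} and \eqref{curlAn}. This is essentially the computation already carried out in \eqref{LB_CrossProduct}, and it gives $|(\text{v})|\le o_{\epsilon,s}(1)M_\epsilon$.

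The one genuinely new point is term (i), the energy of the top layer, which cannot be estimated directly: a single layer's gradient-plus-potential energy is a priori bounded only by the whole energy, i.e. by $C_0 M_\epsilon$, which is far too crude. The remedy is a two-sided sandwich. On one hand, summing the layerwise lower bound \eqref{2DLB} over $n=0,\dots,N-1$ and using $sN|\Omega|=|D|$ gives $\sum_{n=0}^{N-1}sF_\epsilon(u_n,\hat A_n)\ge M_\epsilon(1-o_\epsilon(1))$. On the other hand, Theorem \ref{UB} gives $\mathcal{G}_{LD}^{\epsilon,s}\le M_\epsilon(1+o_{\epsilon,s}(1))$. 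Since term (i) is nonnegative and equals $\mathcal{G}_{LD}^{\epsilon,s}-\sum_{n=0}^{N-1}sF_\epsilon(u_n,\hat A_n)-[(\text{ii})+(\text{iii})+(\text{iv})]-(\text{v})$, combining these two bounds with $(\text{ii})+(\text{iii})+(\text{iv})\ge 0$ and $(\text{v})\ge -o_{\epsilon,s}(1)M_\epsilon$ forces $0\le(\text{i})\le M_\epsilon(1+o_{\epsilon,s}(1))-M_\epsilon(1-o_\epsilon(1))+o_{\epsilon,s}(1)M_\epsilon=o_{\epsilon,s}(1)M_\epsilon$.

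Assembling the pieces, the difference equals $(\text{i})+[(\text{ii})+(\text{iii})+(\text{iv})]+(\text{v})$, whose absolute value is at most $o_{\epsilon,s}(1)M_\epsilon$, which is the claim. I expect the main obstacle to be precisely step (i): recognizing that the top-layer term is not amenable to any direct estimate and must instead be squeezed out by combining the upper bound of Theorem \ref{UB} with the summed two-dimensional lower bound \eqref{2DLB}, rather than bounding it in isolation.
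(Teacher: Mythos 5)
Your proposal is correct and follows essentially the same route as the paper's proof: the lower bound comes from \eqref{LB.3} and \eqref{LB_CrossProduct}, and the upper bound from the discrepancy estimate between $\frac{1}{2}\int_D(\hat{\textnormal{curl}}\hat A-h_{ex})^2\,dx$ and $\sum_n\frac{s}{2}\int_\Omega(\hat{\textnormal{curl}}\hat A_n-h_{ex})^2\,d\hat x$ (via Lemma \ref{TraceEstimate}, Theorem \ref{UB} and \eqref{curlAn}) together with Theorem 2. Your explicit sandwich treatment of the top-layer $n=N$ term --- squeezing it between the global upper bound of Theorem \ref{UB} and the summed two-dimensional lower bound \eqref{2DLB} --- correctly supplies a detail that the paper's terser proof leaves implicit in the phrase ``the definition of $\mathcal{G}_{LD}^{\epsilon,s}$.''
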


\begin{proof}
By \eqref{LB.3} and \eqref{LB_CrossProduct}, we have
\begin{equation*}
\textcolor{black}{\mathcal{G}_{LD}^{\epsilon, s}(\{u_n\}_{n=0}^N, \vec{A})\geq \big(\sum^{N-1}_{n=0}sF_\epsilon(u_n,\hat {A}_n)\big)
-Cs^{1/7} M_{\epsilon}}
\end{equation*}
for all $\epsilon$ and $s$ sufficiently small where $C$ depends only on $R_0$ and $L$. By the Cauchy-Schwartz inequality, Theorem 4.3, Theorem 3.1 and \eqref{curlAn}, we have
\begin{equation*}
\left|\frac{1}{2}\int_{D}|\hat{\text{curl}}\hat A-h_{ex}|^2 dx-\sum\limits_{n=0}^{N-1}\frac{s}{2}\int_{\Omega}|\hat{\text{curl}}\hat{A}_n-h_{ex}|^2 d\hat x\right|\leq Cs^{\frac{1}{7}}M_{\epsilon}
\end{equation*}
for some constant $C$ depending only on $R_0$ and $L$ and all $\epsilon$ and $s$ sufficiently small. Combining this with Theorem 2 and the definition of $\mathcal{G}_{LD}^{\epsilon, s}$, we obtain
\begin{equation*}
\bigl| \mathcal{G}_{LD}^{\epsilon, s}(\{u_n\}_{n=0}^N, \vec{A})-\sum^{N-1}_{n=0}sF_\epsilon(u_n,\hat {A}_n)\bigr|  \leq [Cs^{\frac{1}{7}} + o_{\epsilon}(1)] M_{\epsilon}.
\end{equation*}
\end{proof}

Finally we prove Theorem 3.

\begin{theorem}\label{t5.3}
Under the assumptions of Theorem 1, we have
\begin{equation}\label{5.3.1}
\textcolor{black}{\frac{1}{N+1}\sum\limits_{n=0}^{N}\int_{\Omega} (1-|u_n|^2)^2 d\hat{x} +
\textcolor{black}{\frac{1}{2}}\int_{\mathbb{R}^3} |\frac{\nabla\times\vec{A}}{h_{ex}}-\vec{e}_3|^2 dx
\rightarrow 0}
\end{equation}
and
\begin{equation}\label{5.3.2}
\frac{1}{N+1}\sum_{n=0}^N\frac{\mu_n}{h_{ex}}\rightarrow d\hat{x}\ \text{in}\ H^{-1}(\Omega)
\end{equation}
as $(\epsilon,s)\rightarrow(0,0)$, where \textcolor{black}{$d\hat{x}$ is the two-dimensional Lebesgue measure and} $\mu_n$ is the vorticity on the $n$th layer defined as $$\mu_n=\hat{\textnormal{curl}}(\imath u_n,\hat{\nabla}_{\hat{A}_n}u_n)+\hat{\textnormal{curl}}\hat{A}_n.$$
\end{theorem}
\begin{proof}
It follows immediately from Theorem 3.1 that the first integral in \eqref{5.3.1}
is bounded above by \textcolor{black}{$\frac{4}{L} \epsilon^2 M_{\epsilon}[1+\frac {s}{L}+o_{\epsilon}(1)]$ and the second integral is bounded above by \textcolor{black}{$|D| \frac{1}{h_{ex}} \ln(\frac{1}{\epsilon \sqrt{h_{ex}}})[1+\frac {s}{L}+o_{\epsilon}(1)]$}.} By our assumption,
$|\ln\epsilon|\ll h_{ex}\ll\frac{1}{\epsilon^2}$ and recall that $\frac {s}{L} \leq 1$.  Hence both of these terms converge to zero as $\epsilon$ tends to zero.


By the regularity results proved by Bauman and Ko in \cite{BK}, we know that $(\imath u_n,\hat{\nabla}_{\hat{A}_n}u_n)\in L^2(\Omega;\mathbb{R}^2)$ and $\hat{A}_n\in H^1(\Omega;\mathbb{R}^2)$. Thus
\begin{equation*}
\mu_n=\hat{\textnormal{curl}}(\imath u_n,\hat{\nabla}_{\hat{A}_n}u_n)+\hat{\textnormal{curl}}\hat{A}_n \in H^{-1}(\Omega),
\end{equation*}
and since $|u_n|\leq 1$, we have
\begin{equation*}
\lVert\mu_n-h_{ex}\rVert^2_{H^{-1}(\Omega)}\leq\lVert\hat{\nabla}_{\hat{A}_n}u_n\rVert^2_{L^{2}(\Omega)}+\lVert\hat{\text{curl}}\hat{A}_n-h_{ex}\rVert^2_{L^{2}(\Omega)}.
\end{equation*}
Therefore, by Theorem \ref{UB} and \eqref{curlAn}, we get
\begin{equation*}
\begin{split}
s\sum\limits_{n=0}^{N}&\lVert\mu_n-h_{ex}\rVert^2_{H^{-1}(\Omega)}\leq s\sum\limits_{n=0}^{N}\lVert\hat{\nabla}_{\hat{A}_n}u_n\rVert^2_{L^{2}(\Omega)}+s\sum\limits_{n=0}^{N}\lVert\hat{\text{curl}}\hat{A}_n-h_{ex}\rVert^2_{L^{2}(\Omega)}\\
&=s\sum\limits_{n=0}^{N}\lVert\hat{\nabla}_{\hat{A}_n}u_n\rVert^2_{L^{2}(\Omega)}+\sum\limits_{n=0}^{N}\int_{ns}^{(n+1)s}\int_{\Omega}|\hat{\text{curl}}\hat{A}_n-h_{ex}|^2d\hat{x}dx_3\\
&\leq 2M_{\epsilon}(1+o_{\epsilon,s}(1))+C_0M_{\epsilon}\leq C_0M_{\epsilon}.
\end{split}
\end{equation*}
This implies that
\begin{equation}\label{mu}
s\sum\limits_{n=0}^{N}\lVert\frac{\mu_n}{h_{ex}}-d\hat{x}\rVert^2_{H^{-1}(\Omega)}\leq\frac{C_0 M_{\epsilon}}{h_{ex}^2}\rightarrow 0
\end{equation}
as $(\epsilon,s)\rightarrow (0,0)$. Note that
\begin{equation*}
\begin{split}
\lVert\frac{1}{N+1}\sum\limits_{n=0}^{N}\frac{\mu_n}{h_{ex}}-d\hat{x}\rVert_{H^{-1}(\Omega)}^2=&\lVert\frac{1}{N+1}\sum\limits_{n=0}^{N}(\frac{\mu_n}{h_{ex}}-d\hat{x})\rVert_{H^{-1}(\Omega)}^2\\
=&\frac{1}{(N+1)^2}\lVert\sum\limits_{n=0}^{N}(\frac{\mu_n}{h_{ex}}-d\hat{x})\rVert_{H^{-1}(\Omega)}^2.
\end{split}
\end{equation*}
By the Cauchy-Schwartz inequality and \eqref{mu}, we obtain
\begin{equation*}
\begin{split}
\lVert\frac{1}{N+1}\sum\limits_{n=0}^{N}\frac{\mu_n}{h_{ex}}-d\hat{x}\rVert_{H^{-1}(\Omega)}^2&\leq\frac{N+1}{(N+1)^2}\sum\limits_{n=0}^{N}\lVert\frac{\mu_n}{h_{ex}}-d\hat{x}\rVert_{H^{-1}(\Omega)}^2\\
&=\frac{1}{s(N+1)}\cdot s\sum\limits_{n=0}^{N}\lVert\frac{\mu_n}{h_{ex}}-d\hat{x}\rVert_{H^{-1}(\Omega)}^2\rightarrow 0
\end{split}
\end{equation*}
as $(\epsilon,s)\rightarrow (0,0)$, since $sN=L$ is the height of the domain $D$ which is fixed. This proves \eqref{5.3.2}.
\end{proof}

\section{Comparison results}

\textcolor{black}{In this section we give the proof of Corollary 1. It should be pointed out again that a result similar to the estimate (\ref{thm3.2}) has been obtained by Kachmar \cite{Ka} for the standard three-dimensional Ginzburg-Landau model. In particular, the minimum energy for the standard three-dimensional Ginzburg-Landau model with $h_{ex}$ in the regime as in Theorem 1 follows the same asymptotic formula as obtained in Theorem 1. By analogy, it should not be surprising that the estimate (\ref{thm3.2}) also holds for the three-dimensional anisotropic Ginzburg-Landau model, and hence (\ref{thm3.1}) follows naturally. However, the main point of this section is to use our results on the Lawrence-Doniach model and the connections between this model and the three-dimensional anisotropic Ginzburg-Landau model to give an alternate approach to proving the asymptotic formula for the minimum energy of the latter. Namely, we first derive the comparison result (\ref{thm3.1}) using our Theorem 1 and a result of Chapman, Du and Gunzburger on the connections between the Lawrence-Doniach model and the three-dimensional anisotropic Ginzburg-Landau model. As a result, the estimate (\ref{thm3.2}) follows immediately from (\ref{thm3.1}) and Theorem 1. Our \textcolor{black}{Corollary \ref{c1} thus} generalizes the result in \cite{Ka} to the anisotropic case through a different approach. }

Recall the definition of the anisotropic Ginzburg-Landau energy $\mathcal{G}_{AGL}^{\epsilon,\lambda}$ given in \eqref{AGL} in the introduction. Throughout this section, we set \textcolor{black}{$\mathcal{G}_{LD}^{\epsilon,s,\lambda}=
\mathcal{G}_{LD}^{\epsilon,s}$} as in the introduction. Direct calculations show that $\mathcal{G}_{AGL}^{\epsilon,\lambda}$ is invariant under the gauge transformation
\begin{equation*}
\begin{cases}
\xi(x)=\psi(x)e^{\imath g(x)} \text{ in } \Omega,\\
\vec{B}=\vec{A}+\nabla g \text{ in } \mathbb{R}^3
\end{cases}
\end{equation*}
for some $g\in H^2_{loc}(\mathbb{R}^3)$. Recall the rescaling formulas \eqref{AGLRescaling} for the anisotropic Ginzburg-Landau energies, from which we may translate estimates from \cite{Chapman} to our scaling. As pointed out in \cite{BK}, every minimizer $(\psi,\vec{A})\in H^1(D;\mathbb{C})\times E$ of $\mathcal{G}_{AGL}^{\epsilon,\lambda}$ is gauge equivalent to another pair in $H^1(D;\mathbb{C})\times K$, where the spaces $E$ and $K$ are defined in \eqref{E} and \eqref{K} respectively. The space $H^1(D;\mathbb{C})\times K$ fixes a ``Coulomb gauge" for $(\psi,\vec{A})$ as in the study of the Lawrence-Doniach energy.

\textcolor{black}{By Corollary 5.6 proved by Chapman, Du, and Gunzburger in \cite{Chapman}, for any fixed $\kappa$, $\lambda$ and $\tau > 0$
and any fixed applied magnetic field $\vec{H}=\tau \vec{e}_3$, letting $(\{\phi_n^s\}_{n=0}^N, \vec{V}_{ld}^s)$ and $(\psi_{em},\vec{A}_{em})$ be minimizers of ${\mathcal{G}}^{\kappa,s,\lambda}_{ld}|_{\vec{H}=\tau \vec {e}_3}$ and ${\mathcal{G}}^{\kappa,\lambda}_{em}|_{\vec{H}=\tau \vec {e}_3}$ respectively, we have
\begin{equation}\label{6.1}
\lim\limits_{s\rightarrow 0}{\mathcal {G}}^{\kappa,s,\lambda}_{ld}(\{\phi_n^s\}_{n=0}^N, \vec{V}_{ld}^s)|_{\vec{H}=\tau \vec {e}_3}
={\mathcal{G}}^{\kappa,\lambda}_{em}(\psi_{em},\vec{A}_{em})|_{\vec{H}=\tau \vec {e}_3}.
\end{equation}
Using the rescaling formulas \eqref{LDRescaling} and \eqref{AGLRescaling} \textcolor{black}{for $\mathcal {G}^{\kappa,s,\lambda}_{ld}$ and $\mathcal{G}_{LD}^{\epsilon,s,\lambda}$ and for $\mathcal{G}^{\kappa,\lambda}_{em}$ and $\mathcal{G}_{AGL}^{\epsilon,\lambda}$, respectively}, the relation $\kappa=\frac{1}{\epsilon}$, and \eqref{6.1} with $\tau=\frac{1}{\kappa} h_{ex}=\epsilon h_{ex}$, we obtain
\begin{equation*}
\lim\limits_{s\rightarrow 0}2\epsilon^2\mathcal{G}_{LD}^{\epsilon,s,\lambda}(\{u_n^s\}_{n=0}^N, \vec{A}^s)|_{\vec{H}=\tau \kappa \vec {e}_3}
=2\epsilon^2\mathcal{G}_{AGL}^{\epsilon,\lambda}(\zeta,\vec{B})|_{\vec{H}=\tau \kappa \vec {e}_3},
\end{equation*}
and hence
\begin{equation}\label{6.2}
\lim\limits_{s\rightarrow 0}\mathcal{G}_{LD}^{\epsilon,s,\lambda}(\{u_n^s\}_{n=0}^N, \vec{A}^s)=\mathcal{G}_{AGL}^{\epsilon,\lambda}(\zeta,\vec{B})
\end{equation}
for any fixed $\epsilon$, $\lambda$, and $h_{ex} > 0$, where $(\{u_n^s\}_{n=0}^N, \vec{A}^s)$ and $(\zeta,\vec{B})$ are minimizers of $\mathcal{G}_{LD}^{\epsilon,s,\lambda}$ and $\mathcal{G}_{AGL}^{\epsilon,\lambda}$ in an applied magnetic field $\vec{H}=h_{ex} \vec {e}_3$, respectively. Using the convergence result \eqref{6.2} and Theorem 1, we now
show:}

\begin{lemma}\label{l6.1}
\textcolor{black}{Assume $|\ln\epsilon|\ll h_{ex}\ll\epsilon^{-2}$ as $\epsilon\rightarrow 0$ and $\lambda>0$. Then
\begin{equation*}
|\min\mathcal{G}_{LD}^{\epsilon,s,\lambda}-
\min\mathcal{G}_{AGL}^{\epsilon,\lambda}|
\leq o_{\epsilon,s}(1) M_{\epsilon}
\end{equation*}
as $(\epsilon,s) \to (0,0)$, \textcolor{black}{where recall that $M_{\epsilon}=\frac{|D|}{2}h_{ex}\ln\frac{1}{\epsilon\sqrt{h_{ex}}}$}.}
\end{lemma}
\begin{proof}
\textcolor{black}{If not, for some $\lambda>0$, there exists a sequence $\{(\epsilon_n,s_n)\}$ in ${\mathbb{R}}^+ \times {\mathbb{R}}^+$ converging to $(0,0)$ and a constant $\eta_0>0$ such that
\begin{equation*}
|\min\mathcal{G}_{LD}^{\epsilon_n,s_n,\lambda}-
\min\mathcal{G}_{AGL}^{\epsilon_n,\lambda}| \geq 2\eta_0 M_{\epsilon_n}
\end{equation*}
for all $n$.  By (6.2), for each n there exists a positive constant ${\tilde {s}}_n<\epsilon_n$ such that
\begin{equation*}
|\min\mathcal{G}_{LD}^{\epsilon_n,{\tilde {s}}_n,\lambda}-
\min\mathcal{G}_{AGL}^{\epsilon_n,\lambda}| < \epsilon_n M_{\epsilon_n}
\end{equation*}
and hence
\begin{equation*}
|\min\mathcal{G}_{LD}^{\epsilon_n,{\tilde {s}}_n,\lambda}
-\min\mathcal{G}_{LD}^{\epsilon_n,s_n,\lambda} | \geq \eta_0 M_{\epsilon_n}
\end{equation*}
for all $n$ sufficiently large. Since $\{(\epsilon_n,s_n)\}$ and
$\{(\epsilon_n,{\tilde {s}}_n)\}$ both converge to $(0,0)$ as $n \to \infty$, this contradicts Theorem 1.}
\end{proof}
\textcolor{black}{Combining \textcolor{black}{Lemma} 6.1 and Theorem 1, we obtain:}

\begin{corollary}
\textcolor{black}{Assume $|\ln\epsilon|\ll h_{ex}\ll\epsilon^{-2}$ as $\epsilon\rightarrow 0$ and $\lambda>0$. Let $(\zeta_{\epsilon},\vec{B}_{\epsilon})\in H^1(D;\mathbb C)\times E$ be a minimizer of $\mathcal{G}_{AGL}^{\epsilon,\lambda}$. We have
\begin{equation*}
|\mathcal{G}_{AGL}^{\epsilon,\lambda}(\zeta_{\epsilon},\vec{B}_{\epsilon})-\frac{|D|}{2}h_{ex}\ln\frac{1}{\epsilon\sqrt{h_{ex}}}| \leq o_{\epsilon}(1)\frac{|D|}{2}h_{ex}\ln\frac{1}{\epsilon\sqrt{h_{ex}}}
\end{equation*}
as $\epsilon\rightarrow 0$.}
\end{corollary}

\textcolor{black}{Corollary 1 now follows from \textcolor{black}{Lemma \ref{l6.1}} and Corollary 6.2.}

\begin{bibdiv}
	\begin{biblist}


\bib{ABB1}{article}{
	author={Alama, S.},
	author={Berlinsky, A. J.},
	author={Bronsard, L.},
	title={Minimizers of the Lawrence-Doniach energy in the small-coupling
		limit: finite width samples in a parallel field},
	language={English, with English and French summaries},
	journal={Ann. Inst. H. Poincar\'e Anal. Non Lin\'eaire},
	volume={19},
	date={2002},
	number={3},
	pages={281--312},
	issn={0294-1449},
	review={\MR{1956952}},
	doi={10.1016/S0294-1449(01)00081-6},
}


\bib{ABB2}{article}{
	author={Alama, S.},
	author={Bronsard, L.},
	author={Berlinsky, A. J.},
	title={Periodic vortex lattices for the Lawrence-Doniach model of layered
		superconductors in a parallel field},
	journal={Commun. Contemp. Math.},
	volume={3},
	date={2001},
	number={3},
	pages={457--494},
	issn={0219-1997},
	review={\MR{1849651}},
	doi={10.1142/S0219199701000457},
}


\bib{ABS3}{article}{
	author={Alama, S.},
	author={Bronsard, L.},
	author={Sandier, E.},
	title={On the shape of interlayer vortices in the Lawrence-Doniach model},
	journal={Trans. Amer. Math. Soc.},
	volume={360},
	date={2008},
	number={1},
	pages={1--34},
	issn={0002-9947},
	review={\MR{2341992}},
	doi={10.1090/S0002-9947-07-04188-8},
}


\bib{ABS}{article}{
	author={Alama, S.},
	author={Bronsard, L.},
	author={Sandier, E.},
	title={On the Lawrence-Doniach model of superconductivity: magnetic
		fields parallel to the axes},
	journal={J. Eur. Math. Soc. (JEMS)},
	volume={14},
	date={2012},
	number={6},
	pages={1825--1857},
	issn={1435-9855},
	review={\MR{2984589}},
	doi={10.4171/JEMS/348},
}


\bib{ABS2}{article}{
	author={Alama, S.},
	author={Bronsard, L.},
	author={Sandier, E.},
	title={Minimizers of the Lawrence-Doniach functional with oblique
		magnetic fields},
	journal={Comm. Math. Phys.},
	volume={310},
	date={2012},
	number={1},
	pages={237--266},
	issn={0010-3616},
	review={\MR{2885619}},
	doi={10.1007/s00220-011-1399-2},
}


\bib{ABO}{article}{
	author={Alberti, G.},
	author={Baldo, S.},
	author={Orlandi, G.},
	title={Variational convergence for functionals of Ginzburg-Landau type},
	journal={Indiana Univ. Math. J.},
	volume={54},
	date={2005},
	number={5},
	pages={1411--1472},
	issn={0022-2518},
	review={\MR{2177107}},
	doi={10.1512/iumj.2005.54.2601},
}


\bib{BJOS1}{article}{
	author={Baldo, S.},
	author={Jerrard, R. L.},
	author={Orlandi, G.},
	author={Soner, H. M.},
	title={Convergence of Ginzburg-Landau functionals in three-dimensional
		superconductivity},
	journal={Arch. Ration. Mech. Anal.},
	volume={205},
	date={2012},
	number={3},
	pages={699--752},
	issn={0003-9527},
	review={\MR{2960031}},
	doi={10.1007/s00205-012-0527-2},
}


\bib{BJOS2}{article}{
	author={Baldo, S.},
	author={Jerrard, R. L.},
	author={Orlandi, G.},
	author={Soner, H. M.},
	title={Vortex density models for superconductivity and superfluidity},
	journal={Comm. Math. Phys.},
	volume={318},
	date={2013},
	number={1},
	pages={131--171},
	issn={0010-3616},
	review={\MR{3017066}},
	doi={10.1007/s00220-012-1629-2},
}


\bib{BK}{article}{
	author={Bauman, P.},
	author={Ko, Y.},
	title={Analysis of solutions to the Lawrence-Doniach system for layered
		superconductors},
	journal={SIAM J. Math. Anal.},
	volume={37},
	date={2005},
	number={3},
	pages={914--940},
	issn={0036-1410},
	review={\MR{2191782}},
	doi={10.1137/S0036141004444597},
}


\bib{Chapman}{article}{
	author={Chapman, S. J.},
	author={Du, Q.},
	author={Gunzburger, M. D.},
	title={On the Lawrence-Doniach and anisotropic Ginzburg-Landau models for
		layered superconductors},
	journal={SIAM J. Appl. Math.},
	volume={55},
	date={1995},
	number={1},
	pages={156--174},
	issn={0036-1399},
	review={\MR{1313011}},
	doi={10.1137/S0036139993256837},
}


\bib{Fabes}{article}{
	author={Fabes, E. B.},
	author={Jodeit, M., Jr.},
	author={Rivi\`ere, N. M.},
	title={Potential techniques for boundary value problems on
		$C\sp{1}$-domains},
	journal={Acta Math.},
	volume={141},
	date={1978},
	number={3-4},
	pages={165--186},
	issn={0001-5962},
	review={\MR{501367}},
	doi={10.1007/BF02545747},
}


\bib{Gilbarg}{book}{
	author={Gilbarg, D.},
	author={Trudinger, N. S.},
	title={Elliptic partial differential equations of second order},
	series={Classics in Mathematics},
	note={Reprint of the 1998 edition},
	publisher={Springer-Verlag, Berlin},
	date={2001},
	pages={xiv+517},
	isbn={3-540-41160-7},
	review={\MR{1814364}},
}


\bib{Giorgi-Phillips}{article}{
	author={Giorgi, T.},
	author={Phillips, D.},
	title={The breakdown of superconductivity due to strong fields for the
		Ginzburg-Landau model},
	note={Reprinted from SIAM J. Math. Anal. {\bf 30} (1999), no. 2, 341--359
		[MR 2002b:35235]},
	journal={SIAM Rev.},
	volume={44},
	date={2002},
	number={2},
	pages={237--256},
	issn={0036-1445},
	review={\MR{1926099}},
	doi={10.1137/S003614450139951},
}


\bib{Iye}{article}{
	author={Iye, Y.},
	title={How anisotropic are the cuprate high $T_c$ superconductors?},
	journal={Comments Cond. Mat. Phys.},
	volume={16},
	date={1992},
	pages={89--111},
}


\bib{JS}{article}{
	author={Jerrard, R. L.},
	author={Soner, H. M.},
	title={Limiting behavior of the Ginzburg-Landau functional},
	journal={J. Funct. Anal.},
	volume={192},
	date={2002},
	number={2},
	pages={524--561},
	issn={0022-1236},
	review={\MR{1923413}},
	doi={10.1006/jfan.2001.3906},
}


\bib{Ka}{article}{
	author={Kachmar, A.},
	title={The ground state energy of the three-dimensional Ginzburg-Landau
		model in the mixed phase},
	journal={J. Funct. Anal.},
	volume={261},
	date={2011},
	number={11},
	pages={3328--3344},
	issn={0022-1236},
	review={\MR{2836000}},
	doi={10.1016/j.jfa.2011.08.002},
}


\bib{Pe}{article}{
	author={Peng, G.},
	title={Convergence of the Lawrence-Doniach energy for layered
		superconductors with magnetic fields near $H_{c_1}$},
	journal={SIAM J. Math. Anal.},
	volume={49},
	date={2017},
	number={2},
	pages={1225--1266},
	issn={0036-1410},
	review={\MR{3631388}},
	doi={10.1137/16M1064398},
}


\bib{SS1}{article}{
	author={Sandier, E.},
	author={Serfaty, S.},
	title={On the energy of type-II superconductors in the mixed phase},
	journal={Rev. Math. Phys.},
	volume={12},
	date={2000},
	number={9},
	pages={1219--1257},
	issn={0129-055X},
	review={\MR{1794239}},
	doi={10.1142/S0129055X00000411},
}


\bib{SS3}{article}{
	author={Sandier, E.},
	author={Serfaty, S.},
	title={A rigorous derivation of a free-boundary problem arising in
		superconductivity},
	language={English, with English and French summaries},
	journal={Ann. Sci. \'Ecole Norm. Sup. (4)},
	volume={33},
	date={2000},
	number={4},
	pages={561--592},
	issn={0012-9593},
	review={\MR{1832824}},
	doi={10.1016/S0012-9593(00)00122-1},
}


\bib{SS4}{article}{
	author={Sandier, E.},
	author={Serfaty, S.},
	title={The decrease of bulk-superconductivity close to the second
		critical field in the Ginzburg-Landau model},
	journal={SIAM J. Math. Anal.},
	volume={34},
	date={2003},
	number={4},
	pages={939--956},
	issn={0036-1410},
	review={\MR{1969609}},
	doi={10.1137/S0036141002406084},
}


\bib{SS}{book}{
	author={Sandier, E.},
	author={Serfaty, S.},
	title={Vortices in the magnetic Ginzburg-Landau model},
	series={Progress in Nonlinear Differential Equations and their
		Applications},
	volume={70},
	publisher={Birkh\"auser Boston, Inc., Boston, MA},
	date={2007},
	pages={xii+322},
	isbn={978-0-8176-4316-4},
	isbn={0-8176-4316-8},
	review={\MR{2279839}},
}


\bib{Verchota}{article}{
	author={Verchota, G.},
	title={Layer potentials and regularity for the Dirichlet problem for
		Laplace's equation in Lipschitz domains},
	journal={J. Funct. Anal.},
	volume={59},
	date={1984},
	number={3},
	pages={572--611},
	issn={0022-1236},
	review={\MR{769382}},
	doi={10.1016/0022-1236(84)90066-1},
}


\end{biblist}
\end{bibdiv}

\end{document}